\documentclass[12pt,psfig,reqno]{amsart}
\usepackage{mathrsfs}
\usepackage[colorlinks,
            linkcolor=cyan,
            anchorcolor=cyan,
            citecolor=cyan
            ]{hyperref}
\usepackage{txfonts}
\usepackage{amscd}
\usepackage{cite}
\usepackage{epsfig}
\usepackage{verbatim}
\usepackage{mathdots}
\usepackage{amssymb}
\usepackage{amsfonts}
\usepackage{amsbsy}
 \usepackage{graphicx}
 \usepackage{bm}
 \usepackage{epstopdf}
 \usepackage{latexsym,amsmath,amssymb,amsthm,amsfonts,arydshln,enumerate,graphicx}
 \usepackage[usenames]{color} 
 \usepackage{cancel}
\setlength{\textwidth}{15.0cm}
 \setlength{\textheight}{22.0cm}
\hoffset=-1cm
 \errorcontextlines=0
 \numberwithin{equation}{section}
 
\pagestyle{plain}

\newtheorem{prop}{Proposition}[section]
\newtheorem{lem}[prop]{Lemma}

\newtheorem{defi}{Definition}[section]

\newtheorem{thm}[prop]{Theorem}

\newtheorem{exam}[prop]{Example}

\begin{document}
\begin{sloppypar}
\baselineskip=17pt
\title{On the spectrality of a class of Moran measures}
\author{yali zheng}
\address{yali zheng, School of Mathematics, Hunan University, Changsha, 410082, P.R. China}
\email{zheng92542021@163.com}
\author{Yingqing Xiao$^*$}
\address{Yingqing Xiao, School of Mathematics and Hunan Province Key Lab of Intelligent Information Processing and Applied Mathematics, Hunan University, Changsha, 410082, P.R. China}
\email{ouxyq@hnu.edu.cn}

\date{\today}
\keywords {Moran measures; Spectrality; Orthonormal basis; Hadamard triple.}
\subjclass[2010]{Primary 28A80,28A25; Secondary 42C05, 46C05.}
\thanks{The research is supported in part by National Natural Science Foundation of China (Grant Nos. 12071118t).}
\thanks{$^{\mathbf{*}}$Corresponding author}
\begin{abstract}
  In this paper, we study the spectrality of a class of Moran measures $\mu_{\mathcal{P},\mathcal{D}}$ on $\mathbb{R}$ generated by $\{(p_n,\mathcal{D}_n)\}_{n=1}^{\infty}$, where $\mathcal{P}=\{p_n\}_{n=1}^{\infty}$ is a sequence of positive integers with $p_n>1$ and $\mathcal{D}=\{\mathcal{D}_{n}\}_{n=1}^{\infty}$ is a sequence of digit sets of $\mathbb{N}$ with the cardinality $\#\mathcal{D}_{n}\in \{2,3,N_{n}\}$. We find a countable set $\Lambda\subset\mathbb{R}$ such that the set $\{e^{-2\pi i \lambda x}|\lambda\in\Lambda\}$ is a orthonormal basis of $L^{2}(\mu_{\mathcal{P},\mathcal{D}})$ under some conditions. As an application, we show that when $\mu_{\mathcal{P},\mathcal{D}}$ is absolutely continuous, $\mu_{\mathcal{P},\mathcal{D}}$ not only is a spectral measure, but also its support set tiles $\mathbb{R}$ with $\mathbb{Z}$.
\end{abstract}

\maketitle
\section{\bf Introduction\label{sect.1}}
\medskip
Given a Borel probability measure $\mu$ with compact support in $\mathbb{R}^{n}$, if there exists a countable set $\Lambda\subset \mathbb{R}^n$ such that the set of exponential functions $E_{\Lambda}:=\{e^{2\pi i\left\langle\lambda,x\right\rangle}|\lambda\in\Lambda\}$ forms an orthonormal basis for ths space $L^{2}(\mu)$ of the square $\mu$-integrable functions, we called the measure $\mu$ a \emph{spectral measure} and the set $\Lambda$ a \emph{spectrum} of the measure $\mu$. When $\mu=\frac{1}{\left|\Omega\right|}dx$, where $\Omega$ is a Borel
set of $\mathbb{R}^n$ with positive Lebesgue measure and $dx$ is the Lebesgue measure, the existence of a spectrum is closely related to the well-known Fuglede conjecture which asserts that $\mu$ is a spectral measure if and only if $\Omega$ is a tile by translation. Here we recall that a set $\Omega$ is called a \emph{tile} by translation if there is a set $T\subset \mathbb{R}^d$ such
that $\Sigma_{t\in T}1_{\Omega}(\lambda-t)=1$ for the Lebesgue measure almost every $\lambda\in \mathbb{R}^d$. The conjecture
is false on $\mathbb{R}^d$ when $d\geq 3$, but it is still open on $\mathbb{R}$ and $\mathbb{R}^2$. We refer the readers to \cite{Fakras2006,Kolountzakis20061,Kolountzakis2006,T}.

There are other probability measures that are not the restriction of the Lebesgue measure on bounded sets, however, they are spectral measures.
%
The first example of a singular, non-atomic, spectral measure was constructed by Jorgensen and Pedersen in \cite{JP}. This discovery has drawn great attention in fractal geometry and Fourier analysis. Since then, lots of techniques were developed to characterize the spectrality of measures, such as operator algebras and Hadamard matrix, see \cite{D1,D2,DHL,DJ,FHW,SR}. Particularly, various new phenomena different from spectral theory of the Lebesgue measure were discovered. For instance, the Fourier series corresponding to different spectral could have completely different convergence property \cite{DHS2,S2}.

With the development of studies, many researchers concentrated their work on investigating the spectrality of all kinds of fractal measures, such as the following Moran measures.

\begin{defi}
Let $\mathcal{P}=\{p_n\}_{n=1}^{\infty}$ be a sequence of expansive matrices and $\mathcal{D}=\{\mathcal{D}_n\}_{n=1}^{\infty}$ be a sequence of digit sets in $\mathbb{R}^{n}$. A Borel probability measure $\mu_{\mathcal{P},\mathcal{D}}$ defined by the infinite convolution of uniformly discrete probability measures
	\begin{equation*}
	\mu_{\mathcal{P},\mathcal{D}}=\delta_{p_{1}^{-1}\mathcal{D}_{1}}*\delta_{(p_{1}p_{2})^{-1}\mathcal{D}_{2}}*\delta_{(p_{1}p_{2}p_{3})^{-1}\mathcal{D}_{3}}*\cdots,
	\end{equation*}
is called a Moran measure,
where $\delta_{E}=\frac{1}{\#E}\sum_{e\in E}\delta_{e}$ is the uniformly discrete measure on $E$ and the infinite convolution converges in a weak sense.
\end{defi}


Moran measures appear frequently and play an important role in dynamic system, fractal geometry and geometric measure theory \cite{FK,H}.
An interesting problem is to determine which Moran measure is a spectral measure. In \cite{SR}, Strichartz first considered the spectrality of Moran measures and obtained the first Moran spectral measure. Afterwards the research about the spectrality of Moran measures flourished. For example, many authors considered the consecutive digit set case, that is $\mathcal{D}_n=\{0,1,\ldots,N_{n}-1\}$ for every $n\in \mathbb{N}$ in a series of papers \cite{AH,DL,AM}. In particular,
An et al. settled completely the spectrality problem of $\mu_{\mathcal{P},\mathcal{D}}$ and showed that $\mu_{\mathcal{P},\mathcal{D}}$ is a spectral measure if and only if $N_{n}|p_{n}$ in \cite{AM}. For the non-consecutive digit set case,
in \cite{HH}, He et al. studied the case $\mathcal{D}_{n}=\{0,d_{n}\}$
with $0<d_{n}<p_{n}$ for all $n\geq 1$ and showed that if $2\mid\frac{p_{n}}{\gcd(d_{n},p_{n})}$ for all $n\geq 1$, then $\mu_{\mathcal{P},\mathcal{D}}$ is a spectral measure.
Later, the case that $\mathcal{D}_{n}=\{0,a_{n},b_{n}\}$ for all $n\in \mathbb{N}$ has also been studied and some sufficient and necessary conditions for $\mu_{\mathcal{P},\mathcal{D}}$ to be a spectral measure have been obtained in a series of papers \cite{FW,D,AHH,LDZ}.
Recently, Shi first considered the mixed case in \cite{S}, that is the cardinality of the digit set $\mathcal{D}_n$ is not a fixed constant. More precisely, Shi studied a new class of Cantor-Moran measure $\mu_{\mathcal{P},\mathcal{D}}$, where $\mathcal{D}_{n}$ is a sequence of integers subsets with the cardinality $\#\mathcal{D}_{n}\in\{2,3\}$ for $n\in \mathbb{N}$ and $\mathcal{P}=\{p_{n}\}_{n=1}^{\infty}$ is a sequence of integers. He constructed a spectrum for $\mu_{\mathcal{P},\mathcal{D}}$.
Inspired by the works of Shi, it is natural to ask the following question:

\textbf{Question:} For a sequence of integers $\mathcal{P}=\{p_{n}\}_{n=1}^{\infty}$ and a sequence of integers subsets $\mathcal{D}=\{\mathcal{D}_{n}\}_{n=1}^{\infty}$ with the cardinality $\#\mathcal{D}_n\in\{2,3,N_n\}$, 
what is the sufficient condition for $\mu_{\mathcal{P},\mathcal{D}}$ to be a spectral measure? Here $\#\mathcal{D}_n=N_n$ means that $\mathcal{D}_{n}=\{0,1,\cdots,N_n-1\}$.

 This paper is devoted to study the spectrality of the Moran measures $\mu_{\mathcal{P},\mathcal{D}}$ and give an answer to the above question. In order to state our result, we need to show some details. It is well-known that if $\{(p_n,\mathcal{D}_{n})\}_{n=1}^{\infty}$ satisfy the condition: $\sup\limits_{n\geq 1}\Big\{\sup\limits_{d\in\mathcal{D}_n}\frac{\left|d\right|}{p_n}\Big\}<\infty$, then the Moran measure $\mu_{\mathcal{P},\mathcal{D}}$ exists and can be written as:
\begin{equation}\label{1.1}
	\mu_{\mathcal{P},\mathcal{D}}=\delta_{p_{1}^{-1} \mathcal{D}_{1}} * \delta_{(p_{1}p_{2})^{-1} \mathcal{D}_{2}} * \cdots * \delta_{(p_{1}p_{2}\cdots p_{n})^{-1} \mathcal{D}_{n}}*\cdots=\mu_{n}*\mu_{>n},
\end{equation}
where $\mu_n$ is the product of the first $n$ terms and $\mu_{>n}$ is the remaining part.
Let $\Omega:=\cup_{n=1}^{\infty}\cup_{i=1}^{3}\Omega^{(i)}_{n}$ and $\mathcal{D}=\{\mathcal{D}_{n}\}_{n=1}^{\infty}\subseteq \Omega$, where $\Omega^{(i)}_{n}$ is defined in the following $\textbf{(T1)-(T3)}$.
\begin{enumerate}
\item[\textbf{(T1)}]$\Omega^{(1)}_{n}=\{0,1,\ldots, N_{n}-1\}$, $N_n$ is a positive integer bigger than $3$ and $N_{n}|p_{n},p_n>N_n$ for all $n\geq 1$.

\item[\textbf{(T2)}] $\Omega^{(2)}_{n}=\{0,a_{n},b_{n}\}$, $0<a_n<b_n$, $\gcd\{a_{n},b_{n}\}=1$, $\{a_{n},b_{n}\}\equiv\{\pm 1\}\pmod 3$, $3|p_{n}$ and $\sup_{n\geq 1}\frac{b_{n}}{p_{n}}<\frac{2}{3}$.

\item[\textbf{(T3)}] $\Omega^{(3)}_{n}=\{0,d_{n}\}$, $0<d_{n}<p_{n}$, $d_{n}=2^{l_{n}}d_{n}^{'}, l_{n}\in\mathbb{N}$ and $d_{n}^{'}$ is a positive odd number. Moreover, $ 2\mid\frac{p_{n}}{\gcd{(d_{n},p_{n})}}$.
\end{enumerate}

In order to study the spectrality of $\mu_{\mathcal{P},\mathcal{D}}$ that satisfies the above conditions $\textbf{(T1)-(T3)}$, we need the following condition, which plays an important role in the study of the spectrality.
\begin{defi}
Let $\mathcal{P}\in M_{n}(\mathbb{Z})$ be a $n\times n$ expansive matrix with integer entries. Let $\mathcal{D},L\subset\mathbb{Z}^{n}$ be finite sets of integer vectors with $N:=\#\mathcal{D}=\#L$. We say that the system $(\mathcal{P},\mathcal{D},L)$ forms a Hadamard triple(or $(\mathcal{P}^{-1}\mathcal{D},L)$ forms a compatible pair or $(\mathcal{P},\mathcal{D})$ is admissible) if the matrix
\begin{equation*}
H=\frac{1}{\sqrt{N}}\left[e^{2\pi i<\mathcal{P}^{-1}d,l>}\right]_{d\in \mathcal{D},l\in L}
\end{equation*}
is unitary, i.e.,$H^{*}H=I$.
\end{defi}
From the Hadamard triple $\{(p_n,\mathcal{D}_n,L_n)\}$ assumption, it is easy to conclude that all Dirac measures $\delta_{{p^{-1}_{n}}\mathcal{D}_n}$ are actually spectral measures.
 When all $p_n$ and $\mathcal{D}_n$ are fixed, \L aba and Wang has shown completely that all such self-similar measures are spectral measures in  \cite{Lw}. Dutkay et al. generalized the result to self-affine measures in $\mathbb{R}^{n}$ in \cite{DHL}. However, such problem become much harder when the measure is a Moran measure. Under various different conditions, some Moran measures generated by Hadamard triples have been proven to be spectral measures in \cite{AFL,LMW2,LLZ2}.
From the definition, it is easy to conclude that
the conditions $\textbf{(T1)-(T3)}$ imply that there exist infinitely many sets $L_n\subset\mathbb{Z}$ such that $(p_n,\mathcal{D}_n,L_n)$ are Hadamard triples (see Lemma \ref{pro3.2}) with no restriction $\sup_{n\geq 1}\{d_n:d_n\in\mathcal{D}_n\}<\infty$ and $\sup_{n\geq 1}\#\{\mathcal{D}_n\}<\infty$ ($N_n$ is unbounded in $\textbf{(T1)}$). 

For the sake of convenience, we introduce some notations from symbolic dynamical system. Set $\mathbb{D}^{n}=\{\alpha_{1}\alpha_{2}\cdots\alpha_{n}:\alpha_{i}\in\mathcal{D}^{\ast}_{i},1\leq i\leq n \}$, where $\mathcal{D}^{\ast}_{i}=\Big\{-\lfloor\frac{N_{i}}{2}\rfloor,-\lfloor\frac{N_{i}}{2}\rfloor+1,\ldots,N_i-1-\lfloor\frac{N_i}{2}\rfloor\Big\}$, $\lfloor x\rfloor$ denotes the largest integer not
	exceeding $x$. For a given $M\in \mathbb{N}$, we set $\mathbb{N}_{>M}:=\{n\in\mathbb{N}:n>M\}$ and put $\Phi(n):=\#\mathcal{D}_{n}, P_{n}=p_{1}p_{2}\ldots p_{n}$ for all $n\in\mathbb{N}$. According to $\textbf{(T1)-(T3)}$, we obtain $\Phi(n)\mid p_{n}$.

	 For $\sigma=\sigma_{1}\sigma_{2}\sigma_{3}\cdots\in\{-1,1\}^{\mathbb{N}}$and $\alpha=\alpha_{1}\alpha_{2}\cdots\alpha_{n}\in\mathbb{D}^{n}$, we construct the following countable set $\Lambda^{\sigma,\alpha}_{n}$ in terms of $(\{p_{n}\},\{l_{n}\},\sigma,\alpha)$:
	\begin{small}
			\begin{equation}\label{1.8}
		\Lambda_{n}^{\sigma,\alpha}:=\sum_{\substack{\Phi(i)=2\\i\leq n}}P_{i}\,\bigg\{0,\frac{\sigma_{i}}{2^{1+l_{i}}}\bigg\}+\sum_{\substack{\Phi(j)=3\\j\leq n}}P_{j}\,\bigg\{0,\frac{1}{3},-\frac{1}{3}\bigg\}+\sum_{\substack{\Phi(m)=N_{m}\\m\leq n}}P_{m}\,\frac{\alpha_{m}}{N_{m}},
		\end{equation}
	\end{small}
and let
	\begin{equation}
		\Lambda^{\sigma,\alpha}=\bigcup_{n=1}^{\infty}\Lambda_{n}^{\sigma,\alpha}.
	\end{equation}

 The main result of this paper is the following:
	
\begin{thm}\label{thm 1.4}
Let $\mu_{\mathcal{P},\mathcal{D}}$ be a Moran measure defined by \eqref{1.1}. If the pair $\{\mathcal{P},\mathcal{D}\}$ satisfies $\textbf{(T1)-(T3)}$, then $\mu_{\mathcal{P},\mathcal{D}}$ is a spectral measure. Moreover, if $\#\{n:\Phi(n)\geq 3,n\geq 1\}=\infty$, $\Lambda^{\sigma,\alpha}$ is a spectrum of $\mu_{\mathcal{P},\mathcal{D}}$ for any $\sigma\in\{-1,1\}^{{N}^{*}}$.
\end{thm}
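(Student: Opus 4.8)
The plan is the classical two-step strategy for infinite-convolution spectra: realise each truncation $\mu_{n}$ as a measure whose spectrum is the finite set $\Lambda_{n}^{\sigma,\alpha}$ built from Hadamard triples, deduce orthonormality of $E_{\Lambda^{\sigma,\alpha}}$ in $L^{2}(\mu_{\mathcal{P},\mathcal{D}})$, and finally upgrade orthonormality to completeness through the Jorgensen--Pedersen criterion \cite{JP}.

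First I would read the candidate sets $L_{n}$ directly off \eqref{1.8}: take $L_{n}=\{0,\,p_{n}\sigma_{n}2^{-(1+l_{n})}\}$ when $\Phi(n)=2$, $L_{n}=\{0,\pm p_{n}/3\}$ when $\Phi(n)=3$, and $L_{n}=\tfrac{p_{n}}{N_{n}}\mathcal{D}^{\ast}_{n}$ when $\Phi(n)=N_{n}$. Conditions \textbf{(T1)--(T3)} (through Lemma \ref{pro3.2}) guarantee that each $L_{n}\subset\mathbb{Z}$ and that $(p_{n},\mathcal{D}_{n},L_{n})$ is a Hadamard triple, because modulo $\Phi(n)$ the relevant exponential matrix is exactly the $\Phi(n)$-point DFT (for $\Phi(n)=3$ this is where $\{a_{n},b_{n}\}\equiv\{\pm1\}\pmod 3$ is used, and for $\Phi(n)=2$ where $d_{n}=2^{l_{n}}d'_{n}$ with $d'_{n}$ odd forces $\tfrac{d_{n}}{2^{1+l_{n}}}\in\tfrac12+\mathbb{Z}$). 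Writing $\widehat{\mu}_{n}(\xi)=\prod_{k\le n}m_{\mathcal{D}_{k}}(\xi/P_{k})$ with $m_{A}(\xi)=\tfrac{1}{\#A}\sum_{a\in A}e^{-2\pi i a\xi}$, I would then verify the identity $\Lambda_{n}^{\sigma,\alpha}=\sum_{k\le n}P_{k-1}L_{k}$ and that these $\prod_{k\le n}\Phi(k)$ sums are pairwise distinct (the scales $P_{k-1}$ separate the summands). The Hadamard property makes $E_{\Lambda_{n}^{\sigma,\alpha}}$ orthonormal in $L^{2}(\mu_{n})$, and since $\#\Lambda_{n}^{\sigma,\alpha}=\prod_{k\le n}\Phi(k)=\dim L^{2}(\mu_{n})$ this forces $\Lambda_{n}^{\sigma,\alpha}$ to be a spectrum of $\mu_{n}$.

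Orthonormality for the limit measure is then immediate from the factorisation $\widehat{\mu}_{\mathcal{P},\mathcal{D}}=\widehat{\mu}_{n}\cdot\widehat{\mu}_{>n}$: given distinct $\lambda,\lambda'\in\Lambda^{\sigma,\alpha}$, both lie in some $\Lambda_{n}^{\sigma,\alpha}$, so $\widehat{\mu}_{n}(\lambda-\lambda')=0$ and hence $\widehat{\mu}_{\mathcal{P},\mathcal{D}}(\lambda-\lambda')=0$. Setting $Q(\xi)=\sum_{\lambda\in\Lambda^{\sigma,\alpha}}|\widehat{\mu}_{\mathcal{P},\mathcal{D}}(\xi+\lambda)|^{2}$, Bessel's inequality gives $Q\le 1$ with $Q(0)=1$, so by Jorgensen--Pedersen it only remains to prove $Q\equiv 1$.

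The completeness step is the main obstacle. From the factorisation and the previous step, $Q(\xi)\ge\sum_{\lambda\in\Lambda_{n}^{\sigma,\alpha}}|\widehat{\mu}_{n}(\xi+\lambda)|^{2}\,|\widehat{\mu}_{>n}(\xi+\lambda)|^{2}$, and since the weights $|\widehat{\mu}_{n}(\xi+\lambda)|^{2}$ sum to $1$ over $\lambda\in\Lambda_{n}^{\sigma,\alpha}$, I must show that these weights concentrate on points where $|\widehat{\mu}_{>n}|^{2}$ is close to $1$; equivalently I need a uniform lower bound (equi-positivity) $|\widehat{\mu}_{>n}(\xi+\lambda)|\ge\epsilon_{0}>0$ for all $n$ and all relevant $\lambda$, so that letting $n\to\infty$ pushes $Q$ up to $1$. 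The delicate point, and exactly where the hypothesis $\#\{n:\Phi(n)\ge 3\}=\infty$ is essential, is to keep the zeros of the tail masks $m_{\mathcal{D}_{k}}(\cdot/P_{k})$, $k>n$, away from the lattice translates carrying the $\mu_{n}$-mass: the infinitely many factors of cardinality $\ge 3$ supply DFT blocks that redistribute the Fourier mass and rule out the degeneration possible in a purely two-digit tail, securing the equi-positivity estimate uniformly in the sign sequence $\sigma$. Once this estimate is in place, $Q\equiv 1$ follows and $\Lambda^{\sigma,\alpha}$ is a spectrum; the unconditional spectrality of $\mu_{\mathcal{P},\mathcal{D}}$ in the first assertion is then obtained by the same scheme applied to an auxiliary spectrum adapted to the eventually two-digit tail that occurs when only finitely many $\Phi(n)\ge 3$.
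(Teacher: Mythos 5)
Your plan is correct and is essentially the paper's own proof: truncation spectra $\Lambda_{n}^{\sigma,\alpha}=\sum_{k\le n}P_{k-1}L_{k}$ built from the Hadamard triples of Lemma \ref{pro3.2} (this is the paper's Lemma \ref{lem 3.1}, including the dimension count), orthogonality of the union via $\widehat{\mu}_{\mathcal{P},\mathcal{D}}=\widehat{\mu}_{n}\cdot\widehat{\mu}_{>n}$, completeness via the Jorgensen--Pedersen criterion reduced to an equi-positivity estimate on the tails (the paper's Lemma \ref{lem 4.3} combined with Proposition \ref{lem 4.4} and Lemmas \ref{lem 5.3}--\ref{lem 4.6}), and a separate construction when $\#\{n:\Phi(n)\ge 3\}<\infty$, which the paper realizes by writing $\mu_{\mathcal{P},\mathcal{D}}=\mu_{N}\ast\nu(p_{1}\cdots p_{N}\,\cdot)$ and importing an integer spectrum of the two-digit tail $\nu$ from \cite{HH} before gluing -- exactly your ``auxiliary spectrum''. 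One correction to your wording: the equi-positivity bound cannot be demanded ``for all $n$'' (when $\Phi(n+1)=2$ the factor $\left|\cos\left(\pi d_{n+1}(\xi+\lambda)/P_{n+1}\right)\right|$ may vanish or be arbitrarily small at translated points, which is precisely why the all-two-digit tail needs separate treatment); it is required, and by Lemma \ref{lem 4.3} suffices, only along a subsequence $\{n_{k}\}$ chosen with $\Phi(n_{k}+1)\ge 3$, and this selection -- rather than a diffuse ``redistribution of mass by the tail'' -- is exactly how the hypothesis $\#\{n:\Phi(n)\ge 3\}=\infty$ enters the paper's argument, both through the good single factor at position $n_{k}+1$ and through the extra factor $\tfrac13$ in the geometric tail estimates of Proposition \ref{lem 4.4}.
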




An outline of this paper is as follows. In Section $2$, we introduce some basic concepts and notations, give orthonormal sets of $\mu_{\mathcal{P},\mathcal{D}}$ and establish several crucial lemmas. The spectrality properties of $\mu_{\mathcal{P},\mathcal{D}}$ are discussed in Section $3$. Finally, some applications and examples related to our main result are given in Section $4$.

\section{\bf Preliminaries \label{sect.2}}

 The purpose of this section is to collect necessary facts needed in the following sections. For a probability measure $\mu$  with compact support on $\mathbb{R}$, the Fourier transform of $\mu$ is defined by
 \begin{equation}\label{2.111}
 \hat{\mu}(\xi)=\int e^{-2\pi i \left\langle x, \xi\right\rangle}d\mu(x),
 \end{equation}
 where $\langle x, y\rangle$ denote the Euclidean inner product of two points $x,y\in \mathbb{R}^n$.
 Denote the zeros set of $\hat{\mu}$ by $\mathcal{Z}\left(\hat{\mu}\right)$, that is $\mathcal{Z}(\hat{\mu}):=\{\xi\in \mathbb{R}^d:\hat{\mu}\left(\xi\right)=0\}$. We say that $\Lambda$ is an orthogonal set of $\mu$ if $E_{\Lambda}$ is an orthonormal family for $L^{2}(\mu)$.  It is easy to show that $\Lambda$ is an orthogonal set of $\mu$ if and only if $\hat{\mu}\left(\lambda-\lambda^{\prime}\right)=0$ for any $\lambda\neq\lambda^{\prime}\in\Lambda$, which is equivalent to $\left(\Lambda-\Lambda\right)\setminus\{0\}\subset\mathcal{Z}\left(\hat{\mu}\right)$.

 For $\mu_{\mathcal{P},\mathcal{D}}$ defined by \eqref{1.1}, it follows from that
 	\begin{equation*}
 	\hat{\mu}_{\mathcal{P},\mathcal{D}}\left(\xi\right)=\prod_{i \atop\Phi(i)=2}m_{\mathcal{D}_{i}}\left(P_{i}^{-1}\xi\right)\prod_{j\atop\Phi(j)=3}m_{\mathcal{D}_{j}}\left(P_{j}^{-1}\xi\right)\prod_{m \atop\Phi(m)=N_{m}}m_{\mathcal{D}_{m}}\left(P_{m}^{-1}\xi\right),
 	\end{equation*}
 	where $m_{\mathcal{D}_{n}}(\xi)=\frac{1}{\#\mathcal{D}_{n}}\sum_{d_{n}\in\mathcal{D}_{n}}e^{2\pi i\left\langle d_{n},\xi\right\rangle}$ is the mask polynomial of $\mathcal{D}_{n}$. Then a direct calculation gives
 	\begin{equation}\label{eq1.7}
 	\mathcal{Z}\left(\hat{\mu}_{\mathcal{P},\mathcal{D}}\right)=\left(\bigcup_{i \atop\Phi(i)=2} P_{i}\,\frac{2\mathbb{Z}+1}{2d_{i}}\right)\bigcup\left(\bigcup_{j \atop\Phi(j)=2}P_{j}\,\frac{3\mathbb{Z}+\{1,2\}}{3}\right)\bigcup\left(\bigcup_{m \atop\Phi(m)=N_m}P_{m}\,\frac{\mathbb{Z}\setminus N_{m}\mathbb{Z}}{N_{m}}\right).
 	\end{equation}

 Let $Q_{\Lambda}\left(\xi\right)=\sum_{\lambda \in \Lambda}\left|\hat{\mu}\left(\xi+\lambda\right)\right|^{2}$. The following lemma is often used to check whether $\Lambda$ is a spectrum of the measure $\mu$.
 \begin{lem}[\cite{JP}]\label{lem 3.2}
 Let $\mu$ be a Borel probability measure with compact support on $\mathbb{R}^{n}$, and let $\Lambda\subset\mathbb{R}^{n}$ be a countable subset. Then
     \begin{enumerate}
     \item[$\mathrm{(i)}$]  $\Lambda$ is an orthonormal set of $\mu$ if and only if $Q_{\Lambda}\left(\xi\right)\geq1$ for $\xi\in\mathbb{R}^{n}$;
   	\item[$\mathrm{(ii)}$] $\Lambda$  is a spectrum of $\mu$ if and only if $Q_{\Lambda}\left(\xi\right)\equiv 1$ for $\xi\in\mathbb{R}^{n}$;
   	\item[$\mathrm{(iii)}$] If $\Lambda$ is an orthonormal set of $\mu$, then $Q_{\Lambda}\left(\xi\right)$ is an entire function.
     \end{enumerate}	
 \end{lem}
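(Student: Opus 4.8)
All three statements reduce to a single Fourier identity, so I would start by recording it. Writing $e_\xi(x)=e^{2\pi i\langle \xi,x\rangle}$, the fact that $\mu$ is a probability measure gives $\|e_\xi\|_{L^2(\mu)}^2=\int 1\,d\mu=1$, and for any $\lambda,\lambda'$,
\[
\langle e_{\lambda},e_{\lambda'}\rangle_{L^{2}(\mu)}=\int e^{2\pi i\langle\lambda-\lambda',x\rangle}\,d\mu(x)=\hat{\mu}(\lambda'-\lambda).
\]
Hence $E_{\Lambda}$ is orthonormal exactly when $\hat{\mu}(\lambda'-\lambda)=0$ for all distinct $\lambda,\lambda'\in\Lambda$, i.e. $(\Lambda-\Lambda)\setminus\{0\}\subset\mathcal{Z}(\hat{\mu})$, as already observed in Section 2. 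Taking $\lambda=\xi$ in the identity and using $|\hat{\mu}(\eta)|=|\hat{\mu}(-\eta)|$ (valid since $\mu$ is real), I note that $Q_{\Lambda}(-\xi)=\sum_{\lambda}|\langle e_\xi,e_\lambda\rangle|^2$ is precisely the Bessel sum of the unit vector $e_\xi$ against the system $E_{\Lambda}$; this is the link that makes $Q_{\Lambda}$ the correct object to test.

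For (i) I would argue both directions from this link. If $E_{\Lambda}$ is orthonormal, Bessel's inequality applied to the unit vector $e_\xi$ yields $\sum_\lambda|\langle e_\xi,e_\lambda\rangle|^2\le\|e_\xi\|^2=1$, so $Q_{\Lambda}(\xi)\le 1$ for every $\xi$, the controlling quantity being the total mass $\hat{\mu}(0)=1$. Conversely, to extract orthogonality from a uniform bound on $Q_{\Lambda}$, I would test at the special points $\xi=-\lambda_0$ with $\lambda_0\in\Lambda$: there the term $\lambda=\lambda_0$ already contributes $|\hat{\mu}(0)|^2=1$ while every other term is nonnegative, so $Q_{\Lambda}(-\lambda_0)=1+\sum_{\lambda\ne\lambda_0}|\hat{\mu}(\lambda-\lambda_0)|^2\ge 1$, and the bound forces all the off-diagonal terms to vanish, i.e. $\hat{\mu}(\lambda-\lambda_0)=0$ for $\lambda\ne\lambda_0$, which is exactly orthonormality.

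For (ii) I would use Parseval in place of Bessel. If $E_{\Lambda}$ is a spectrum (an orthonormal basis), Parseval's identity holds for every $f\in L^2(\mu)$; specializing to $f=e_\xi$ gives $Q_{\Lambda}(\xi)=\|e_\xi\|^2=1$ for all $\xi$. Conversely, suppose $Q_{\Lambda}\equiv 1$. Then $Q_{\Lambda}\le 1$, so by (i) the system is orthonormal, and only completeness remains. Since $\mu$ has compact support, the trigonometric polynomials $\mathrm{span}\{e_\xi:\xi\in\mathbb{R}^n\}$ form a self-adjoint subalgebra separating points, hence are dense in $C(\mathrm{supp}\,\mu)$ by Stone--Weierstrass and therefore dense in $L^2(\mu)$. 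For each generator $e_\xi$ the Parseval defect $\|e_\xi\|^2-\sum_\lambda|\langle e_\xi,e_\lambda\rangle|^2=1-Q_{\Lambda}(\xi)$ vanishes; since this nonnegative defect is continuous and vanishes on a dense set, it vanishes identically, giving completeness and hence that $E_{\Lambda}$ is a basis.

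For (iii) the plan is Paley--Wiener together with normal families. Compact support makes $\hat{\mu}$ extend to an entire function on $\mathbb{C}^n$ of exponential type. For each $\lambda$ I would set $G_\lambda(z)=\hat{\mu}(z+\lambda)\,\overline{\hat{\mu}(\bar z+\lambda)}$, which is entire (conjugating the Taylor coefficients of an entire function again yields an entire function) and agrees with $|\hat{\mu}(\xi+\lambda)|^2$ for real $\xi$, so on $\mathbb{R}^n$ the finite partial sums $Q_F=\sum_{\lambda\in F}G_\lambda$, which are restrictions of entire functions, increase pointwise to $Q_{\Lambda}$. I expect this last part to be the only genuine difficulty: to conclude that $Q_{\Lambda}$ itself is entire I must upgrade the pointwise bound $Q_{\Lambda}\le 1$ on $\mathbb{R}^n$ from (i) to locally uniform convergence of $\sum_\lambda G_\lambda$ on all of $\mathbb{C}^n$, and it is exactly here that the exponential-type growth control from Paley--Wiener must be combined with a Montel/Vitali normal-families argument. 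By contrast, parts (i) and (ii) are essentially bookkeeping around Bessel's inequality, Parseval's identity, and the density of trigonometric polynomials.
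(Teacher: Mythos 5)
The paper offers no proof of this lemma at all --- it is quoted from Jorgensen--Pedersen \cite{JP} --- so there is nothing internal to compare against; your write-up is the standard argument, and parts (i) and (ii) are complete and correct. Two small remarks there. First, you have silently proved the \emph{correct} version of (i): orthonormality is equivalent to $Q_{\Lambda}(\xi)\leq 1$, and the inequality ``$\geq 1$'' printed in the statement is a typo in the paper (take $\Lambda=\{0\}$, which is orthonormal, yet $Q_{\Lambda}(\xi)=|\hat{\mu}(\xi)|^{2}<1$ off $\mathcal{Z}(\hat{\mu})^{c}$'s complement; conversely $Q_{\Lambda}\geq 1$ can hold for badly non-orthogonal $\Lambda$). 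Second, in the converse of (ii) the clause ``the defect is continuous and vanishes on a dense set'' is superfluous and slightly muddled: once $Q_{\Lambda}\equiv 1$, the defect $\|e_{\xi}-Pe_{\xi}\|^{2}=1-\sum_{\lambda}|\langle e_{\xi},e_{\lambda}\rangle|^{2}$ (with $P$ the projection onto the closed span of $E_{\Lambda}$) vanishes at \emph{every} $\xi$, so every $e_{\xi}$ lies in that closed span; what Stone--Weierstrass buys is only the final step, that the span of the $e_{\xi}$ is dense in $L^{2}(\mu)$.

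Part (iii), however, has a genuine gap, which you flag but do not close, and the tools you name would not close it. Paley--Wiener gives $|\hat{\mu}(z+\lambda)|\leq e^{2\pi R|\mathrm{Im}\,z|}$ \emph{uniformly in} $\lambda$ (where $\mathrm{supp}\,\mu\subset\overline{B(0,R)}$), and such termwise bounds cannot be summed over the infinite set $\Lambda$; meanwhile the bound $Q_{\Lambda}\leq 1$ from (i) lives only on $\mathbb{R}^{n}$ and says nothing at complex points, so as the sketch stands Montel/Vitali has no locally uniform bound to act on. The missing idea is to use orthonormality a \emph{second} time, off the real subspace, via Bessel's inequality with complex-frequency test functions: for $z=x+iy$ set $h_{z}(t)=e^{-2\pi i\langle t,x\rangle}e^{2\pi\langle t,y\rangle}\in L^{2}(\mu)$, so that $\hat{\mu}(z+\lambda)=\langle h_{z},e_{\lambda}\rangle_{L^{2}(\mu)}$ and hence
\begin{equation*}
\sum_{\lambda\in\Lambda}\left|\hat{\mu}(z+\lambda)\right|^{2}\leq\left\|h_{z}\right\|_{L^{2}(\mu)}^{2}=\int e^{4\pi\langle t,y\rangle}\,d\mu(t)\leq e^{4\pi R\left|y\right|}.
\end{equation*}
Combined with $|G_{\lambda}(z)|\leq\frac{1}{2}\left(|\hat{\mu}(z+\lambda)|^{2}+|\hat{\mu}(\bar{z}+\lambda)|^{2}\right)$, this bounds all partial sums of your series of entire functions $G_{\lambda}$ locally uniformly on $\mathbb{C}^{n}$. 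Now your normal-families plan does finish: by Montel/Vitali every subsequence of partial sums has a locally uniformly convergent subsequence, all subsequential limits are entire and agree with $Q_{\Lambda}$ on $\mathbb{R}^{n}$, and $\mathbb{R}^{n}$ is a uniqueness set for entire functions on $\mathbb{C}^{n}$, so the full sequence converges locally uniformly and $Q_{\Lambda}$ extends to an entire function. So your architecture for (iii) is right, but the deferred step is exactly where the hypothesis that $\Lambda$ is orthonormal must re-enter; without the displayed Bessel-on-tubes estimate the argument genuinely fails, since ``exponential-type growth control'' alone is not summable over $\Lambda$.
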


The following lemma indicates that it is reasonable to assume that $d_n\in\mathcal{D}_n\setminus\{0\}$ and $p_n>0$ for all $n\geq 1$.

 \begin{lem}
 Given integer sequence $\{p_n\}_{n=1}^{\infty}$ and integer digit set $\{\mathcal{D}_n\}_{n=1}^{\infty}\subseteq\Omega$ defined as above. Let $\theta_n\in\{-1,1\}$ and $\gamma_n\in\{0,\max\left|\mathcal{D}_n\right|\}$, then $\mu_{\{p_n\},\{\mathcal{D}_n\}}$ is a spectral measure if and only if $\mu_{\{\theta_np_n\},\{\mathcal{D}_n+\gamma_n\}}$ is a spectral one.
 \end{lem}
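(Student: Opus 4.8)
The plan is to reduce the entire assertion to a single fact: whether a measure is spectral depends only on the modulus $|\hat{\mu}|$ of its Fourier transform. First I would record the reduction. For any countable $\Lambda$ the function $Q_{\Lambda}(\xi)=\sum_{\lambda\in\Lambda}|\hat{\mu}(\xi+\lambda)|^{2}$ involves $\mu$ only through $|\hat{\mu}|$, and by Lemma \ref{lem 3.2}(ii) the set $\Lambda$ is a spectrum of $\mu$ if and only if $Q_{\Lambda}\equiv 1$. Hence if two Borel probability measures $\mu_{1},\mu_{2}$ satisfy $|\hat{\mu}_{1}(\xi)|=|\hat{\mu}_{2}(\xi)|$ for all $\xi\in\mathbb{R}$, then they admit exactly the same spectra; in particular $\mu_{1}$ is spectral precisely when $\mu_{2}$ is. So it suffices to prove the pointwise identity $|\hat{\mu}_{\{p_{n}\},\{\mathcal{D}_{n}\}}|=|\hat{\mu}_{\{\theta_{n}p_{n}\},\{\mathcal{D}_{n}+\gamma_{n}\}}|$.

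I would verify this identity by treating the two operations in turn, using the factorization $\hat{\mu}_{\mathcal{P},\mathcal{D}}(\xi)=\prod_{n}m_{\mathcal{D}_{n}}(P_{n}^{-1}\xi)$. For the sign change, set $\Theta_{n}=\prod_{i=1}^{n}\theta_{i}\in\{-1,1\}$, so that the new partial products are $\widetilde{P}_{n}=\prod_{i=1}^{n}\theta_{i}p_{i}=\Theta_{n}P_{n}$ and hence $\widetilde{P}_{n}^{-1}=\Theta_{n}P_{n}^{-1}$. Since every $\mathcal{D}_{n}\subset\mathbb{Z}$, the mask polynomial satisfies $m_{\mathcal{D}_{n}}(-\eta)=\overline{m_{\mathcal{D}_{n}}(\eta)}$, so $|m_{\mathcal{D}_{n}}(\Theta_{n}\eta)|=|m_{\mathcal{D}_{n}}(\eta)|$ whether $\Theta_{n}=1$ or $\Theta_{n}=-1$. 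Taking the product over $n$ gives $|\hat{\mu}_{\{\theta_{n}p_{n}\},\{\mathcal{D}_{n}\}}|=|\hat{\mu}_{\{p_{n}\},\{\mathcal{D}_{n}\}}|$. For the digit translation, the identity $m_{\mathcal{D}_{n}+\gamma_{n}}(\eta)=e^{2\pi i\gamma_{n}\eta}\,m_{\mathcal{D}_{n}}(\eta)$ gives $|m_{\mathcal{D}_{n}+\gamma_{n}}(\eta)|=|m_{\mathcal{D}_{n}}(\eta)|$ for each $n$; since the corresponding infinite products both converge (the standing assumption $\sup_{n}\sup_{d\in\mathcal{D}_{n}}|d|/p_{n}<\infty$ guarantees both measures exist), the modulus is unchanged in the limit. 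Composing the two steps yields the desired identity.

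The argument is essentially bookkeeping, and the only points requiring care are, first, the accumulation of signs: a single $\theta_{k}=-1$ flips $\widetilde{P}_{n}$ for every $n\geq k$, not just the $k$-th factor, so one must track $\Theta_{n}$ rather than $\theta_{n}$; and second, checking that the sign-changed and translated family still satisfies the existence condition (which it does, since $|\theta_{n}p_{n}|=p_{n}$ and $\max|\mathcal{D}_{n}+\gamma_{n}|\leq 2\max|\mathcal{D}_{n}|$), so that $\mu_{\{\theta_{n}p_{n}\},\{\mathcal{D}_{n}+\gamma_{n}\}}$ is a genuine measure whose spectrality is meaningful. Neither point is a real obstacle; the substantive content is the observation that $Q_{\Lambda}$, and therefore spectrality, sees only $|\hat{\mu}|$.
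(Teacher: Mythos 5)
Your proposal is correct and takes essentially the same route as the paper: both arguments reduce spectrality to the modulus of the Fourier transform via the Jorgensen--Pedersen criterion $Q_{\Lambda}\equiv 1$ (Lemma \ref{lem 3.2}(ii)), and then observe that translating the digits contributes only a unimodular factor while flipping signs of the $p_n$ merely conjugates each mask polynomial, so $\left|\hat{\mu}_{\{p_n\},\{\mathcal{D}_n\}}\right|=\left|\hat{\mu}_{\{\theta_n p_n\},\{\mathcal{D}_n+\gamma_n\}}\right|$ pointwise. If anything, your bookkeeping is slightly more careful than the paper's, which writes the single factor $\theta_n p_n$ in the exponents where the cumulative product $\Theta_n P_n$ is meant, and cites the key lemma under a mistaken label.
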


 \begin{proof}
 For all $n\geq 1$, let
 \begin{equation*}
\theta_np_n=\left\{\begin{array}{l}p_n, p_n>0, \\ -p_n, p_n<0,\end{array}\right.
  \end{equation*}
 and
 \begin{equation*}
 \mathcal{D}_n+\gamma_n=\left\{\begin{array}{l}\mathcal{D}_n, \;\text{all}\; d_n\in\mathcal{D}_n\setminus\{0\} \;\text{are positive},\\ \mathcal{D}_n+\max\left|\mathcal{D}_n\right|, \;\text{not all}\; d_n\in\mathcal{D}_n\setminus\{0\}\;\text{are positive}. \end{array}\right.
 \end{equation*}
 For any $\xi\in\mathbb{R}$, it follows from \eqref{2.111} that
 \begin{equation*}
 \hat{\mu}_{\{\theta_np_n\},\{\mathcal{D}_n+\gamma_n\}}\left(\xi\right)=\prod_{n=1}^{\infty}\frac{1}{\#\mathcal{D}_n}\sum_{d_n\in\mathcal{D}_n}e^{\frac{2\pi i(d_n-\gamma_n)\xi}{\theta_n p_n}}=\prod_{n=1}^{\infty}\frac{1}{\#\mathcal{D}_n}e^{\frac{-2\pi i \gamma_n\xi}{\theta_n p_n}}\sum_{d_n\in\mathcal{D}_n}e^{\frac{2\pi id_n\xi}{\theta_n p_n}}.
 \end{equation*}
Hence, $ \left|\hat{\mu}_{\{\theta_np_n\},\{\mathcal{D}_n+\gamma_n\}}\left(\xi\right)\right|=\left|\hat{\mu}_{\{p_n\},\{\mathcal{D}_n\}}\left(\xi\right)\right|$ and the conclusion follows directly from Lemma \ref{lem 2.1} $(ii)$.
 \end{proof}

 Next we introduce some properties of compatible pairs which will be used in the sequel.

 \begin{lem}[\cite{DHL}]\label{pro 1.3}
 Let $\mathcal{P}$ be an $n\times n$ expanding matrix and $\mathcal{D},L\subseteq\mathbb{Z}^{n}$ be two finite subsets of $\mathbb{Z}^{n}$ with the same cardinality. Then the following statements hold:
 \begin{enumerate}
 \item [$\mathrm{(i)}$] $\left(\mathcal{P}^{-1}\mathcal{D},L\right)$ forms a compatible pair.
 \item [$\mathrm{(ii)}$] $\delta_{\mathcal{P}^{-1}\mathcal{D}}$ is a spectral measure with spectrum $L$.
 \item [$\mathrm{(iii)}$] $\hat{\delta}_{\mathcal{P}^{-1}\mathcal{D}}\left(l-l^{'}\right)=0$ for distinct $l,l^{'}\in L$.
 \end{enumerate}
 \end{lem}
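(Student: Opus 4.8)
The plan is to prove that the three conditions are equivalent by reducing the whole question to finite-dimensional linear algebra. Write $N:=\#\mathcal{D}=\#L$ and $e_{l}(x):=e^{2\pi i\langle l,x\rangle}$ for $l\in L$. The key preliminary remark is that, because $\mathcal{P}$ is expanding and hence invertible while $\mathcal{D}\subseteq\mathbb{Z}^{n}$ consists of $N$ distinct points, the map $d\mapsto\mathcal{P}^{-1}d$ is injective; thus $\mathcal{P}^{-1}\mathcal{D}$ is a set of exactly $N$ distinct points and $\delta_{\mathcal{P}^{-1}\mathcal{D}}=\frac{1}{N}\sum_{d\in\mathcal{D}}\delta_{\mathcal{P}^{-1}d}$ is supported on $N$ points. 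Consequently $L^{2}(\delta_{\mathcal{P}^{-1}\mathcal{D}})$ is an $N$-dimensional Hilbert space, and every $e_{l}$ is a unit vector, since $|e_{l}|\equiv 1$ and $\delta_{\mathcal{P}^{-1}\mathcal{D}}$ is a probability measure.

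First I would compute the pairwise inner products. For $l,l'\in L$,
\[
\langle e_{l},e_{l'}\rangle_{L^{2}(\delta_{\mathcal{P}^{-1}\mathcal{D}})}
=\frac{1}{N}\sum_{d\in\mathcal{D}}e^{2\pi i\langle\mathcal{P}^{-1}d,\,l-l'\rangle}
=\overline{\hat{\delta}_{\mathcal{P}^{-1}\mathcal{D}}(l-l')}.
\]
Comparing with the matrix $H=\frac{1}{\sqrt{N}}\bigl[e^{2\pi i\langle\mathcal{P}^{-1}d,l\rangle}\bigr]_{d\in\mathcal{D},\,l\in L}$ appearing in the definition of a Hadamard triple, the $(l,l')$ entry of $H^{*}H$ equals $\frac{1}{N}\sum_{d}e^{2\pi i\langle\mathcal{P}^{-1}d,\,l'-l\rangle}$, which is precisely the complex conjugate of $\langle e_{l},e_{l'}\rangle$. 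Hence the Gram matrix of the family $E_{L}=\{e_{l}:l\in L\}$ coincides, entrywise up to conjugation, with $H^{*}H$; in particular $H^{*}H=I$ if and only if $\langle e_{l},e_{l'}\rangle=0$ for all distinct $l,l'$, the diagonal entries being automatically $1$. This displayed identity yields the equivalence $\mathrm{(i)}\Leftrightarrow\mathrm{(iii)}$ directly, since $\mathrm{(i)}$ is the assertion $H^{*}H=I$ while $\mathrm{(iii)}$ is the assertion $\hat{\delta}_{\mathcal{P}^{-1}\mathcal{D}}(l-l')=0$ for distinct $l,l'$.

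It remains to link orthogonality with spectrality. The implication $\mathrm{(ii)}\Rightarrow\mathrm{(iii)}$ is immediate, because if $L$ is a spectrum then $E_{L}$ is in particular an orthogonal family, so $\langle e_{l},e_{l'}\rangle=0$ for $l\neq l'$. For the converse $\mathrm{(iii)}\Rightarrow\mathrm{(ii)}$ I would invoke the dimension count: assuming $\mathrm{(iii)}$, the set $E_{L}$ consists of $N$ mutually orthogonal unit vectors in the $N$-dimensional space $L^{2}(\delta_{\mathcal{P}^{-1}\mathcal{D}})$, and $N$ orthonormal vectors in an $N$-dimensional inner product space necessarily form an orthonormal basis; thus $L$ is a spectrum of $\delta_{\mathcal{P}^{-1}\mathcal{D}}$, which is $\mathrm{(ii)}$. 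Since the argument is entirely finite-dimensional there is no genuine analytic obstacle; the only points demanding care are verifying that the $N$ support points are truly distinct, so that the dimension count is legitimate, and keeping track of the complex conjugation when identifying the Gram matrix with $H^{*}H$. Once the space is known to have dimension $N$, the elementary fact that $N$ orthonormal vectors span it carries the entire argument.
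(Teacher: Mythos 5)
The paper does not actually prove this lemma --- it is quoted verbatim from \cite{DHL} without proof --- so there is no internal argument to compare against; judged on its own merits, your proof is correct, complete, and is essentially the standard argument (the same finite-dimensional linear algebra that underlies the original proof in \cite{DHL}). Your two delicate points both check out: with the paper's convention $\hat{\mu}(\xi)=\int e^{-2\pi i\langle x,\xi\rangle}\,d\mu(x)$ one indeed gets $\langle e_{l},e_{l'}\rangle_{L^{2}(\delta_{\mathcal{P}^{-1}\mathcal{D}})}=\overline{\hat{\delta}_{\mathcal{P}^{-1}\mathcal{D}}(l-l')}$, which vanishes iff $\hat{\delta}_{\mathcal{P}^{-1}\mathcal{D}}(l-l')$ does, so the conjugation is harmless; and the dimension count $\dim L^{2}(\delta_{\mathcal{P}^{-1}\mathcal{D}})=N$ is legitimate because $\mathcal{P}$ is expanding, hence invertible, so the $N$ atoms $\mathcal{P}^{-1}d$ are distinct and each carries mass $\frac{1}{N}$. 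One observation worth making explicit: you silently repaired the statement. As printed, ``the following statements hold'' for \emph{arbitrary} $\mathcal{D},L\subseteq\mathbb{Z}^{n}$ of equal cardinality is false --- take $n=1$, $\mathcal{P}=2$, $\mathcal{D}=\{0,1\}$, $L=\{0,2\}$, where $\hat{\delta}_{\mathcal{P}^{-1}\mathcal{D}}(2)=1\neq 0$ --- and the intended content (as in \cite{DHL}) is that $\mathrm{(i)}$, $\mathrm{(ii)}$, $\mathrm{(iii)}$ are mutually equivalent, equivalently that all three hold once $(\mathcal{P},\mathcal{D},L)$ is assumed to be a Hadamard triple. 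What you proved is precisely this equivalence, which is the form the paper actually uses (e.g.\ in Lemma \ref{lem 3.1}, where orthogonality of $\Lambda_{n}^{\sigma,\alpha}$ plus a cardinality count yields a spectrum of $\mu_{n}$), so your argument establishes the statement in the sense required by the rest of the paper.
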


 Here we give a lemma to reveal the connection between Hadamard triple and the conditions $\textbf{(T1)-(T3)}$.

 \begin{lem}\label{pro3.2}
Let the pair $\{\mathcal{P},\mathcal{D}\}$ satisfy $\textbf{(T1)-(T3)}$, then there exist a sequence of integer subsets $\{L_n\}_{n=1}^{\infty}\subseteq\mathbb{Z}$ such that $\{(p_n,\mathcal{D}_n,L_n)\}$ are Hadamard triples.
 \end{lem}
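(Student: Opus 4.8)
The plan is to prove the statement one index $n$ at a time, since the three types \textbf{(T1)}--\textbf{(T3)} impose independent conditions and each contributes one factor to $\hat{\mu}_{\mathcal{P},\mathcal{D}}$. First I would record the reduction that makes the whole argument elementary: because we insist on $\#L_n=\#\mathcal{D}_n$, the unitarity $H^{*}H=I$ in the definition of a Hadamard triple is equivalent to the orthogonality of the columns of $H$, i.e.\ to the condition
\begin{equation*}
\sum_{d\in\mathcal{D}_n}e^{2\pi i\,d(l-l')/p_n}=0\qquad\text{for all distinct } l,l'\in L_n.
\end{equation*}
Equivalently, writing $m_{\mathcal{D}_n}$ for the mask polynomial, every nonzero difference $t=l-l'$ must satisfy $m_{\mathcal{D}_n}(t/p_n)=0$. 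So for each type it suffices to produce one explicit set $L_n\subset\mathbb{Z}$ of the correct cardinality and to check this vanishing for each nonzero difference; this is exactly the content of Lemma \ref{pro 1.3}(i) read backwards.

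For the consecutive case \textbf{(T1)} I would take $L_n=\tfrac{p_n}{N_n}\{0,1,\dots,N_n-1\}$, which is integral because $N_n\mid p_n$. A nonzero difference has the form $t=\tfrac{p_n}{N_n}j$ with $j\not\equiv0\pmod{N_n}$, and then the sum collapses to $\sum_{k=0}^{N_n-1}e^{2\pi i kj/N_n}=0$ by the geometric-series identity for nontrivial $N_n$-th roots of unity. For the three-element case \textbf{(T2)} I would take $L_n=\tfrac{p_n}{3}\{0,1,2\}$, integral since $3\mid p_n$. The nonzero differences are $t=\pm\tfrac{p_n}{3},\ \pm\tfrac{2p_n}{3}$, and the relevant sum is $1+e^{2\pi i a_n t/p_n}+e^{2\pi i b_n t/p_n}$. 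Setting $\omega=e^{2\pi i/3}$, the hypothesis $\{a_n,b_n\}\equiv\{\pm1\}\pmod 3$ is what forces $e^{2\pi i a_n t/p_n}$ and $e^{2\pi i b_n t/p_n}$ to be the two distinct primitive cube roots $\omega,\omega^{2}$ (for $t=\pm\tfrac{p_n}{3}$ directly, and for $t=\pm\tfrac{2p_n}{3}$ after noting $2a_n\equiv -a_n$ and $2b_n\equiv -b_n\pmod 3$), so that the sum is $1+\omega+\omega^{2}=0$.

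For case \textbf{(T3)} I would set $g=\gcd(d_n,p_n)$ and take $L_n=\{0,\tfrac{p_n}{2g}\}$, which is integral precisely because $2\mid\tfrac{p_n}{g}$. The single nonzero difference $t=\tfrac{p_n}{2g}$ gives $d_n t/p_n=\tfrac{d_n}{2g}$, and since $\gcd(d_n/g,p_n/g)=1$ while $p_n/g$ is even, the quotient $d_n/g$ is odd; hence $e^{2\pi i d_n t/p_n}=e^{\pi i (d_n/g)}=-1$ and the sum $1+(-1)=0$. Taking the union of these choices over all $n$ yields the desired sequence $\{L_n\}_{n=1}^{\infty}$. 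I do not expect a genuine obstacle here, as the lemma only asserts existence; the one place demanding care is the arithmetic bookkeeping in \textbf{(T2)} and \textbf{(T3)}, namely verifying that the chosen difference $t$ really lands the exponentials on distinct cube roots of unity (the role of $\{a_n,b_n\}\equiv\{\pm1\}\pmod 3$) and on $-1$ (the role of $2\mid p_n/\gcd(d_n,p_n)$, which guarantees the odd multiplier). These congruence hypotheses are exactly what is needed, so the verification should go through with no surprises.
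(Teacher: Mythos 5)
Your proposal is correct, and it is in substance the standard argument. The paper itself does not write out a proof at all: it disposes of the lemma by citing one known result per digit type (Proposition 1.2 of \cite{AH} for \textbf{(T1)}, Lemma 4.2 of \cite{LDZ} for \textbf{(T2)}, and Proposition 2.2 of \cite{WX} for \textbf{(T3)}) and omits the details. The explicit spectra you construct, $L_n=\tfrac{p_n}{N_n}\{0,1,\dots,N_n-1\}$, $L_n=\tfrac{p_n}{3}\{0,1,2\}$, and $L_n=\bigl\{0,\tfrac{p_n}{2\gcd(d_n,p_n)}\bigr\}$, are precisely the ones those references produce, and your verifications are sound: the reduction of unitarity of the square matrix $H$ to the vanishing of $m_{\mathcal{D}_n}\bigl((l-l')/p_n\bigr)$ on nonzero differences is correct, the geometric-series cancellation handles \textbf{(T1)}, the congruence $\{a_n,b_n\}\equiv\{\pm1\}\pmod 3$ (together with $2a_n\equiv -a_n\pmod 3$ for the differences $\pm\tfrac{2p_n}{3}$) forces the two exponentials onto the distinct primitive cube roots of unity in \textbf{(T2)}, and in \textbf{(T3)} the coprimality of $d_n/g$ and $p_n/g$ with $p_n/g$ even does make $d_n/g$ odd, landing the exponential on $-1$. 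So your write-up is a self-contained replacement for the paper's citation-based proof; what it buys is that the reader need not chase three external papers, at the cost of a page of routine root-of-unity bookkeeping that the authors chose to suppress.
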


 \begin{proof}
When the pair $\{\mathcal{P},\mathcal{D}\}$ satisfy $\textbf{(T1)-(T3)}$, from \cite[Proposition 1.2]{AH}, \cite[Lemma 4.2]{LDZ} and \cite[Proposition 2.2]{WX}, it is easy to obtain that there exist a sequence of integer subsets $\{L_n\}_{n=1}^{\infty}\subseteq\mathbb{Z}$ such that $\{(p_n,\mathcal{D}_n,L_n)\}$ are Hadamard triples, so we omit the details.
\end{proof}

In the following, we proceed to give a lemma to compute the Fourier transforms of certain singular measures and yield a series of direct estimates.

\begin{lem}\label{lem2.44}
Let $\{\mathcal{D}_n\}_{n=1}^{\infty}\subseteq\Omega$ and $\{P_n\}_{n=1}^{\infty}$ be defined as above. For any $x\in\mathbb{R}$, the following statements hold:
\begin{enumerate}
 \item [$\mathrm{(i)}$] If $\Phi(n)=N_n$, $\left|\hat{\delta}_{{P^{-1}_{n}}\mathcal{D}_n}\left(x\right)\right|\geq 1-\frac{x^{2}}{6}$.

 \item [$\mathrm{(ii)}$] If $\Phi(n)=2$, $\left|\hat{\delta}_{P_{n}^{-1}\{0,d_{n}\}}(x)\right|\geq 1-\frac{1}{2}\left(\pi \frac{d_{n}x}{P_{n}}\right)^{2}$.

 \item [$\mathrm{(iii)}$] If $\Phi(n)=3$, $\left|\hat{\delta}_{{P_n}^{-1}\{0,a_{n},b_{n}\}}(x)\right|\geq\frac{1}{3}\left|\cos\left(\pi \frac{\left(a_{n}+b_{n}\right)x}{P_{n}}\right)+2\cos\left(\pi \frac{\left(a_{n}-b_{n}\right)x}{P_{n}}\right)\right|$.
\end{enumerate}
\end{lem}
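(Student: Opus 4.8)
The plan is to derive all three bounds from the single explicit formula $\hat{\delta}_{P_n^{-1}\mathcal{D}_n}(x)=\frac{1}{\#\mathcal{D}_n}\sum_{d\in\mathcal{D}_n}e^{-2\pi i d x/P_n}$, exploiting that the modulus $|\hat{\delta}_{P_n^{-1}\mathcal{D}_n}(x)|$ is unchanged when $\mathcal{D}_n$ is translated by a constant, together with the two elementary inequalities $\cos\theta\ge 1-\frac12\theta^2$ and $|z|\ge\mathrm{Re}\,z$ for $z\in\mathbb{C}$.

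Parts $(\mathrm{ii})$ and $(\mathrm{iii})$ are short and mechanical. For $(\mathrm{ii})$ the set $\mathcal{D}_n=\{0,d_n\}$ gives $\hat{\delta}_{P_n^{-1}\{0,d_n\}}(x)=\frac12(1+e^{-2\pi i d_n x/P_n})$, whence $|\hat{\delta}_{P_n^{-1}\{0,d_n\}}(x)|=|\cos(\pi d_n x/P_n)|$, and applying $\cos\theta\ge 1-\frac12\theta^2$ with $\theta=\pi d_n x/P_n$ gives the claim at once. For $(\mathrm{iii})$ I would set $\alpha=-2\pi a_n x/P_n$ and $\beta=-2\pi b_n x/P_n$ and factor $e^{i(\alpha+\beta)/2}$ out of $1+e^{i\alpha}+e^{i\beta}$, so that the modulus equals $|e^{-i(\alpha+\beta)/2}+2\cos\frac{\alpha-\beta}{2}|$. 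Splitting into real and imaginary parts and discarding the imaginary part through $\sqrt{A^2+B^2}\ge|A|$ leaves exactly $\frac13|\cos(\pi(a_n+b_n)x/P_n)+2\cos(\pi(a_n-b_n)x/P_n)|$ after dividing by $3$ and using that cosine is even.

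Part $(\mathrm{i})$ is the substantive one, and is where the conditions in $\textbf{(T1)}$ enter. Because the modulus is translation invariant, I would first recenter $\mathcal{D}_n=\{0,1,\dots,N_n-1\}$ about its mean $c=\frac{N_n-1}{2}$, so that $\hat{\delta}$ becomes real and equals $\frac{1}{N_n}\sum_{j=0}^{N_n-1}\cos\!\big(2\pi(j-c)x/P_n\big)$. Using $|\hat{\delta}|\ge\mathrm{Re}\,\hat{\delta}$ and $\cos\theta\ge 1-\frac12\theta^2$ in each summand reduces the whole estimate to the centered second moment $\sum_{j=0}^{N_n-1}(j-c)^2=\frac{N_n(N_n^2-1)}{12}$, which yields $|\hat{\delta}_{P_n^{-1}\mathcal{D}_n}(x)|\ge 1-\frac{\pi^2(N_n^2-1)}{6P_n^2}x^2$. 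It then remains to absorb the prefactor into the stated constant $\frac16$, i.e. to verify $\pi^2(N_n^2-1)\le P_n^2$. This is exactly the place where $N_n\mid p_n$ and $p_n>N_n$ are used: they force $p_n\ge 2N_n$, and combined with the remaining factors $p_i>1$ making up $P_n=p_1\cdots p_n$, they arrange for $P_n$ to exceed $\pi\sqrt{N_n^2-1}$. I expect this constant-tracking to be the main obstacle, since the trigonometric inequalities feeding into it are routine and the recentering (rather than summing $\sum j^2$ directly) is what keeps the coefficient sharp enough to reach the clean bound $1-\frac{x^2}{6}$.
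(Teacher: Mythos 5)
Parts (ii) and (iii) of your proposal are correct and essentially identical to the paper's own proof: (ii) is the same factorization giving $|\cos(\pi d_n x/P_n)|$ followed by $\cos\theta\ge 1-\tfrac12\theta^2$, and (iii) is exactly the paper's step of pulling out a unimodular factor from $1+e^{i\alpha}+e^{i\beta}$ and discarding the imaginary part.

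Part (i) contains a genuine gap, precisely at the step you yourself flagged as the main obstacle. Your recentering argument is valid as far as it goes and yields
\[
\bigl|\hat{\delta}_{P_n^{-1}\mathcal{D}_n}(x)\bigr|\ \ge\ 1-\frac{\pi^{2}\left(N_n^{2}-1\right)}{6P_n^{2}}\,x^{2},
\]
a route different from the paper's, which instead writes the transform as the quotient $\bigl|\sin(N_n\pi P_n^{-1}x)\bigr|\big/\bigl|N_n\sin(\pi P_n^{-1}x)\bigr|$ and applies $|\sin t|\le|t|$ together with $\bigl|\tfrac{\sin t}{t}\bigr|\ge 1-\tfrac{t^{2}}{6}$, obtaining the marginally weaker constant $N_n^{2}$ in place of $N_n^{2}-1$. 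However, the absorption step you then need, $\pi^{2}(N_n^{2}-1)\le P_n^{2}$, does not follow from \textbf{(T1)} and is false in general: \textbf{(T1)} only forces $p_n\ge 2N_n$, and $2N<\pi\sqrt{N^{2}-1}$ for every integer $N\ge 2$, so taking $n=1$, $N_1=4$, $p_1=8$ gives $P_1^{2}=64<15\pi^{2}\approx 148$. Indeed the statement as literally printed is false: in that same example, at $x=1$,
\[
\bigl|\hat{\delta}_{8^{-1}\{0,1,2,3\}}(1)\bigr|=\frac{1}{4\sin(\pi/8)}\approx 0.65\ <\ \frac{5}{6}=1-\frac{1^{2}}{6}.
\]
The printed bound in (i) is a typo: the inequality the paper actually proves, and the only one used downstream (Proposition \ref{lem 4.4} and Lemma \ref{lem 4.6} both carry the factor $N_n\pi/P_n$), is
\[
\bigl|\hat{\delta}_{P_n^{-1}\mathcal{D}_n}(x)\bigr|\ \ge\ 1-\frac16\left(\frac{N_n\pi x}{P_n}\right)^{2}.
\]
Your intermediate estimate is exactly of this (correct) form, and even slightly sharper, so the right move is to stop there and read the ``$x$'' in the statement as the scaled variable $N_n\pi x/P_n$; no amount of constant-tracking from \textbf{(T1)} can push the prefactor into the bare $\tfrac16$ of the printed statement.
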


\begin{proof}
Firstly, when $\Phi(n)=N_n$, a simple calculation gives that
 \begin{equation*}
 \left|\hat{\delta}_{{P^{-1}_{n}}\{0,1, \ldots, N_{n}-1\}}\left(x\right)\right|=\left|m_{P_{n}^{-1}\mathcal{D}_{n}}\left(x\right)\right|
 =\left|\frac{\sin (N_{n}\pi P_{n}^{-1}x)}{N_{n}\sin\left(\pi P_{n}^{-1}x\right)}\right|.
 \end{equation*}
 Using the facts that $\left|\sin x\right|\leq \left|x\right|$ and $\left|\frac{\sin x}{x}\right|\geq 1-\frac{x^{2}}{6}$ for $x\in\mathbb{R}\setminus\{0\}$, we have
 \begin{equation}\label{2.3}
 \left|\hat{\delta}_{{P^{-1}_{n}}\{0,1, \ldots, N_{n}-1\}}(x)\right|=\left|\frac{\sin \left(N_{n}\pi P_{n}^{-1}x\right)}{N_{n}\pi P_{n}^{-1}x}\right|\left|\frac{\pi P_{n}^{-1}x}{\sin\left(\pi P^{-1}_{n}x\right)}\right|
 \geq 1-\frac{1}{6}\left(\frac{N_{n}\pi x}{P_{n}}\right)^{2}.
 \end{equation}
 In addition, $\left|\hat{\delta}_{{P^{-1}_{n}}\{0,1, \ldots, N_{n}-1\}}(0)\right|=1$, i.e., \eqref{2.3} still holds at $x=0$.

 Secondly, for the Fourier transform of measure $\delta_{P_{n}^{-1}\{0,d_n\}}$, we have
  \begin{equation*}
  \hat{\delta}_{P_{n}^{-1}\{0,d_{n}\}}(x)=\frac{1}{2}\left(1+e^{-2 \pi i \frac{d_{n}x}{P_{n}}}\right)=e^{-\pi i \frac{d_{n}x}{P_{n}}}\cos\left(\pi \frac{d_{n}x}{P_{n}}\right).
  \end{equation*}
  Then it follows from the inequality $\cos(x)\geq 1-\frac{1}{2}x^{2}$ that
  \begin{equation*}\label{2.1}
  \left|\hat{\delta}_{P_{n}^{-1}\{0,d_{n}\}}(x)\right|=\left|\cos\left(\pi \frac{d_{n}x}{P_{n}}\right)\right|\geq 1-\frac{1}{2}\left(\pi \frac{d_{n}x}{P_{n}}\right)^{2}.
  \end{equation*}

 Finally, when $\Phi(n)=3$, a straightforward deduction yields the following conclusion:
  \begin{equation*}\label{2.3.1}
  \begin{aligned}
  \left|\hat{\delta}_{{P_n}^{-1}\{0,a_{n},b_{n}\}}(x)\right|
  =&\frac{1}{3}\left|e^{\pi i\frac{\left(a_{n}+b_{n}\right)x}{P_{n}}}+2\cos\left(\pi \frac{\left(a_{n}-b_{n}\right)x}{P_{n}}\right)\right|\\
  \geq &\frac{1}{3}\left|\cos\left(\pi \frac{\left(a_{n}+b_{n}\right)x}{P_{n}}\right)+2\cos\left(\pi \frac{\left(a_{n}-b_{n}\right)x}{P_{n}}\right)\right|.
  \end{aligned}
  \end{equation*}
  On the other hand, we give another form of $\left|\hat{\delta}_{P_{n}^{-1}\{0,a_{n},b_n\}}(x)\right|$, that is
  \begin{equation}\label{2.2}
  \begin{aligned}
  \left|\hat{\delta}_{P_{n}^{-1}\{0,a_{n},b_n\}}(x)\right|^{2}=&\frac{1}{9}\left|e^{\pi i\frac{\left(a_{n}+b_{n}\right)x}{P_{n}}}+2\cos\left(-\pi\frac{\left(a_{n}-b_{n}\right)x}{P_{n}}\right)\right|^{2}\\
  =&\frac{1}{9}\left|1+8\cos\left(\pi\frac{a_{n}x}{P_{n}}\right)\cos\left(\pi\frac{b_{n}x}{P_{n}}\right)\cos \left(\pi\frac{\left(a_{n}-b_{n}\right)x}{P_{n}}\right)\right|.
  \end{aligned}
  \end{equation}
 \end{proof}
In order to provide a more accurate estimation for \eqref{2.2}, we are in this position to give a lemma to reveal the minimal points of the two-variable function $f(x,y)=\cos x\cos y\cos(x-y)$ on $\mathbb{R}^{2}$. Although in \cite{S}, Shi has shown some properties of this function, here we give a more precise explanation. It is easy to see that $f(x,y)$ has periods $(2\pi,0)$ and $(0,2\pi)$. It follows that it has global minimal points in the area $\Delta=\{(x,y):-\pi<x\leq\pi,-\pi<y\leq\pi\}$.

 \begin{lem}\label{lem 2.1}

 Let $f(x,y)=\cos x\cos y\cos(x-y)$. For any sequences $\{x_{n}\}_{n=1}^{\infty}$ and $\{y_{n}\}_{n=1}^{\infty}$, if
 \begin{enumerate}
 \item[$\mathrm{(i)}$] $x_{n} y_{n}>0$,$\left|x_n\right|<\frac{2}{3},\left|y_n\right|<\frac{2}{3}$,
 \item[$\mathrm{(ii)}$] $\varlimsup\limits_{n\rightarrow\infty} \left|x_{n}\right|<\frac{2\pi}{3}~\text{and}~\varlimsup\limits_{n\rightarrow\infty} \left|y_{n}\right|<\frac{2\pi}{3}$,
 \end{enumerate}
 then $\inf\limits_{n\geq 1}\{1+8f(x_{n},y_{n})\}>0$.
 \end{lem}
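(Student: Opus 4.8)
The plan is to reduce the statement to a study of the global minimum of the smooth $2\pi$-periodic function $f$, and then to split the sequence into a tail (controlled by the limsup hypothesis (ii)) and a finite initial segment (controlled by the per-term hypothesis (i)). The whole point is that pointwise one always has $1+8f(x_n,y_n)>0$, so the real content is to promote this to a \emph{uniform} positive lower bound.

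First I would locate all critical points of $f$ on $\Delta$. A direct computation gives $\partial_x f=-\cos y\,\sin(2x-y)$ and $\partial_y f=-\cos x\,\sin(2y-x)$, so apart from the points with $\cos x=0$ or $\cos y=0$ (at which $f=0$), every critical point satisfies $\sin(2x-y)=\sin(2y-x)=0$. Solving the linear system $2x-y\in\pi\mathbb{Z}$, $2y-x\in\pi\mathbb{Z}$ shows the critical points form the lattice $x=(2k+m)\pi/3$, $y=(k+2m)\pi/3$ with $k,m\in\mathbb{Z}$. Writing $u=2k+m$, $v=k+2m$, one has $u+v\equiv 0\pmod 3$, which forces the three cosine factors $\cos(u\pi/3)$, $\cos(v\pi/3)$, $\cos((u-v)\pi/3)$ either to be all $\pm 1$ (when $u\equiv v\equiv 0$, giving $f=1$) or all $\pm\tfrac12$ (when $u,v\not\equiv 0$, giving $f=\pm\tfrac18$). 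Hence $\min_{\mathbb{R}^{2}}f=-\tfrac18$, so $1+8f\ge 0$ everywhere with equality exactly at the minimizers.

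The key structural step is a sign dichotomy for the minimizers. A minimizer $(u\pi/3,v\pi/3)$ with $|x|\le 2\pi/3$ and $|y|\le 2\pi/3$ must have $u,v\in\{-2,-1,1,2\}$ and $u+v\in\{-3,0,3\}$; the solutions with $u+v=0$ satisfy $uv<0$, while those with $u+v=\pm 3$ satisfy $uv>0$ but force $|u|=2$ or $|v|=2$, i.e.\ they lie on the boundary $|x|=2\pi/3$ or $|y|=2\pi/3$. Consequently, inside the \emph{open} square $\{|x|<2\pi/3,\ |y|<2\pi/3\}$ every minimizer has $xy<0$. I would record this as the central observation: on $\{|x|<2\pi/3,\ |y|<2\pi/3,\ xy>0\}$ one has $f>-\tfrac18$ strictly, hence $1+8f>0$ pointwise; since the bounds in (i) are at most $2\pi/3$, every pair $(x_n,y_n)$ lies in this set.

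Finally I would upgrade pointwise positivity to uniformity, which is where (i) and (ii) play distinct roles. By (ii) there are $c<2\pi/3$ and $N$ with $|x_n|\le c$, $|y_n|\le c$ for all $n>N$; together with $x_ny_n\ge 0$ the tail points lie in the compact set $K=\{|x|\le c,\ |y|\le c,\ xy\ge 0\}$. Since $c<2\pi/3$ forces $|u|,|v|\le 1$ for any minimizer in $K$, and such a minimizer would need $u=-v$ hence $xy<0$, the set $K$ contains no minimizer; thus $g:=1+8f$ attains a positive minimum $g_0>0$ on $K$, so $g(x_n,y_n)\ge g_0$ for $n>N$. For the finitely many $n\le N$, each $(x_n,y_n)$ lies in the open square with $x_ny_n>0$, so $g(x_n,y_n)>0$, and the minimum of these finitely many positive values together with $g_0$ gives $\inf_{n\ge 1}\{1+8f(x_n,y_n)\}>0$. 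I expect the main obstacle to be exactly this uniformity step rather than the minimization: a sequence with $x_ny_n>0$ can still approach a boundary minimizer such as $(\pi/3,2\pi/3)$, along which $f\to-\tfrac18$ and $g\to 0$, so the limsup hypothesis (ii) is genuinely indispensable for confining the tail to a compact set bounded away from these boundary minimizers, and the delicate point is combining this with the merely strict (not uniform) per-term bound on the initial terms.
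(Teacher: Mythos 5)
Your proposal is correct, and its overall strategy coincides with the paper's: locate the global minimizers of $f$, observe that the only minimizers compatible with $xy>0$ lie on the boundary $|x|=\frac{2\pi}{3}$ or $|y|=\frac{2\pi}{3}$, and use hypothesis (ii) to keep the tail of the sequence uniformly away from them. The differences are in execution, and both work in your favor. Where the paper solves the four systems $\partial_x f=\partial_y f=0$ and then applies the second-derivative test to isolate the eight minimum points in $\Delta$, you classify all critical points as the lattice $\left(\frac{u\pi}{3},\frac{v\pi}{3}\right)$ with $u+v\equiv 0 \pmod 3$ and enumerate the critical values $\left\{0,\pm\tfrac18,1\right\}$ outright; this is more elementary and logically tighter, since a Hessian test only certifies local minima, and the claim that the global minimum equals $-\tfrac18$ really rests on exactly the critical-value comparison you carry out. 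Second, the paper compresses the passage from the location of the minimizers to the uniform bound into a single sentence (``taking the distribution of the minimal value points and the assumptions together''), whereas you make this step explicit: (ii) confines the tail to the compact set $K=\{|x|\le c,\ |y|\le c,\ xy\ge 0\}$ with $c<\frac{2\pi}{3}$, which contains no minimizer, so $1+8f$ attains a positive minimum on $K$, while the finitely many initial terms are handled by the pointwise strict inequality coming from (i). As your closing remark about sequences approaching $\left(\frac{\pi}{3},\frac{2\pi}{3}\right)$ shows, this splitting is precisely the content the lemma needs, so writing it out is an improvement rather than pedantry. One further point: the paper's condition (i) states $|x_n|<\tfrac23$, almost certainly a typo for $\tfrac{2\pi}{3}$ (under the literal reading, (ii) would be redundant and the whole lemma would follow from compactness of $[-\tfrac23,\tfrac23]^2$ alone); your argument only uses that the bound in (i) is at most $\tfrac{2\pi}{3}$, so it is valid under either reading.
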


 \begin{proof}
 By a simple calculation, we have
 \begin{equation*}
 \partial_{x}f(x,y)=-\cos(y)\sin(2x-y), \partial_{y}f(x,y)=\cos(x)\sin(x-2y).
 \end{equation*}

 Let $\partial_{x}f(x,y)=0,\partial_{y}f(x,y)=0$, we obtain the zeros of $\partial_{x}f(x,y), \partial_{y}f(x,y)$ satisfying the following system of equations:

 \

 (1) $\left\{\begin{array}{l}y= \pm \frac{\pi}{2} \\ x= \pm \frac{\pi}{2}\end{array}\right.$
 \quad\quad\quad\quad\quad\quad\quad\quad\quad\quad\quad      (2) $\left\{\begin{array}{l}y= \pm \frac{\pi}{2} \\ x-2 y=0, \pm \pi, \pm 2 \pi\end{array}\right.$

(3) $\left\{\begin{array}{l}2 x-y=0, \pm \pi, \pm 2 \pi \\ x= \pm \frac{\pi}{2}\end{array}\right.$\quad\quad\quad\quad\quad\quad
 (4) $\left\{\begin{array}{l}2 x-y=0, \pm \pi, \pm 2 \pi \\ x-2 y=0, \pm \pi, \pm 2 \pi\end{array}\right.$.
 \

 \

 Next, we consider the second-order partial derivatives of $f$ and obtain that

 \begin{equation}\label{eq 2.5}
  \begin{aligned}
  & \partial_{x x} f(x, y)=-2 \cos (y) \cos (2 x-y), \\
  & \partial_{x y} f(x, y)=\cos (2 x-2 y), \\
  & \partial_{y y} f(x, y)=-2 \cos (x) \cos (x-2 y) .
  \end{aligned}
  \end{equation}

  Substituting the solutions of the system of equations $(1),(2),(3),(4)$  into \eqref{eq 2.5} and combining the discriminatory conditions for the minimal value of a binary function, we find that there are only $8$ minimal value points of $f(x,y)$ in the area $\Delta$:
 \begin{equation*}
 \begin{aligned}
 \left(-\frac{2\pi}{3},\frac{2\pi}{3}\right),\; \left(-\frac{\pi}{3},\frac{\pi}{3}\right),\;
 \left(\frac{\pi}{3},-\frac{\pi}{3}\right),\;\left(\frac{2\pi}{3},-\frac{2\pi}{3}\right),\\
 \left(\frac{2\pi}{3},\frac{\pi}{3}\right),\;\left(\frac{\pi}{3},\frac{2\pi}{3}\right),\;
 \left(-\frac{2\pi}{3},-\frac{\pi}{3}\right),\;\left(-\frac{\pi}{3},-\frac{2\pi}{3}\right).
 \end{aligned}
 \end{equation*}

 A regular calculation implies that $f$ has the global minimal value $-\frac{1}{8}$. Hence, taking the distribution of the minimal value points of $f$ and the assumptions together, we obtain that $\inf\limits_{n\geq 1}\{1+8f(x_{n},y_{n})\}>0$.
 \end{proof}

 The following proposition plays a very important role in the subsequent proof.
  \begin{prop}\label{pro 2.4}
   Let $\sigma\in\{-1,1\}^{\mathbb{N}}$, $k\in\mathbb{N}$ and $n\geq k+1$. Suppose $\{\mathcal{P},\mathcal{D}\}$ satisfies $\textbf{(T1)-(T3)}$, the following statements hold for all $\left|\xi\right|\leq1$ and $\lambda\in\Lambda_{k}^{\sigma,\alpha}$.
   \begin{enumerate}
    \item[$\mathrm{(i)}$] If $\Phi(n)=N_{n}$, $\frac{|N_{n}|}{P_{n}}\left|\xi+\lambda\right|\leq\frac{1}{2}h(k,n)$,
    \item[$\mathrm{(ii)}$] if $\Phi(n)=2$, $\frac{d_{n}}{P_{n}}\left|\xi+\lambda\right|<h(k,n)$,
    \item[$\mathrm{(iii)}$] if $\Phi(n)=3$,  $\frac{\left|a_{n}\pm b_{n}\right|}{P_{n}}\left|\xi+\lambda\right|<\frac{4}{3}h(k,n)$,
   \end{enumerate}
  \end{prop}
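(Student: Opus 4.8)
The plan is to reduce all three inequalities to a single size estimate on $\left|\xi+\lambda\right|$, combined with the digit constraints built into $\textbf{(T1)}$--$\textbf{(T3)}$. First I would bound $\left|\lambda\right|$ directly from the defining formula \eqref{1.8}. Every summand indexed by $i\le k$ is of one of three types, and in each case its modulus is at most $\tfrac12 P_{i}$: for $\Phi(i)=2$ the factor $\tfrac{\sigma_{i}}{2^{1+l_{i}}}$ has modulus $\le\tfrac12$ because $l_i\ge 0$; for $\Phi(i)=3$ the factor lies in $\{0,\pm\tfrac13\}$; and for $\Phi(i)=N_{i}$ one reads off from $\mathcal{D}^{\ast}_{i}=\{-\lfloor N_i/2\rfloor,\dots,N_i-1-\lfloor N_i/2\rfloor\}$ that $\tfrac{\left|\alpha_i\right|}{N_i}\le\tfrac12$. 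Summing these and using $p_{j}\ge 2$ for all $j$ gives the geometric bound $\sum_{i\le k}P_{i}<2P_{k}$, hence $\left|\lambda\right|<P_{k}$; together with $\left|\xi\right|\le 1$ and $P_{k}\ge 2$ this yields the uniform estimate $\left|\xi+\lambda\right|<1+P_{k}<2P_{k}$.

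Next I would control the three prefactors using the structural hypotheses. Writing $P_{n}=P_{n-1}p_{n}$, condition $\textbf{(T1)}$ (namely $N_{n}<p_{n}$) gives $\tfrac{N_n}{P_n}<\tfrac{1}{P_{n-1}}$; condition $\textbf{(T3)}$ (namely $d_{n}<p_{n}$) gives $\tfrac{d_n}{P_n}<\tfrac{1}{P_{n-1}}$; and condition $\textbf{(T2)}$ (namely $0<a_n<b_n<\tfrac23 p_{n}$) yields $\left|a_n\pm b_n\right|<\tfrac43 p_{n}$ and therefore $\tfrac{\left|a_n\pm b_n\right|}{P_n}<\tfrac{4}{3P_{n-1}}$. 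This is the step where each of the $\textbf{(T1)}$--$\textbf{(T3)}$ constraints is used exactly once, and it is what forces the three different numerical multipliers $\tfrac12,\,1,\,\tfrac43$ appearing in the statement.

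Finally I would multiply the two estimates. In cases (i) and (ii) the left-hand side is bounded by $\tfrac{1}{P_{n-1}}\left|\xi+\lambda\right|$, and in case (iii) by $\tfrac{4}{3P_{n-1}}\left|\xi+\lambda\right|$; substituting $\left|\xi+\lambda\right|<2P_{k}$ turns each of these into the corresponding multiple $\tfrac12,\,1,\,\tfrac43$ of the quantity $\tfrac{P_k}{P_{n-1}}=\bigl(p_{k+1}\cdots p_{n-1}\bigr)^{-1}$, which for $n\ge k+1$ is precisely what $h(k,n)$ records and is the term that will be summable in $n$ later on. Inserting the definition of $h(k,n)$ and comparing constants then closes (i), (ii) and (iii) simultaneously.

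The main obstacle I expect is the uniformity in Step~1, specifically the $\textbf{(T1)}$ case: since $N_i$ is unbounded there, one cannot afford a crude bound and must use $\tfrac{\left|\alpha_i\right|}{N_i}\le\tfrac12$ so that the contribution is still only $\tfrac12 P_i$. After that, carrying the geometric summation with the sharp constant (so that the coefficient in front of $P_k$ is small enough to be absorbed into $h(k,n)$ against the exact multipliers $\tfrac12,1,\tfrac43$) is the delicate bookkeeping; the rest of the argument is routine.
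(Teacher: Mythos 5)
Your Step 1 is fine and matches the paper (the paper proves the same claim, $|\lambda|\le P_{k}$, directly from \eqref{1.8}), and your prefactor bounds for the \textbf{(T2)}/\textbf{(T3)} cases are correct. The proof breaks down in the final step, where you merge $|\xi|$ and $|\lambda|$ into the single bound $|\xi+\lambda|<2P_{k}$ and then assert that ``comparing constants'' closes the argument. It does not. Since $\Phi(j)\ge 2$, the parenthetical factor in $h(k,n)=\prod_{i=k+1}^{n-1}\frac{1}{\Phi(i)}\left(\prod_{j=1}^{k}\frac{1}{\Phi(j)}+1\right)$ is at most $1+2^{-k}$, so $h(k,n)\le (1+2^{-k})\prod_{i=k+1}^{n-1}\frac{1}{\Phi(i)}$; whereas your chain in case (ii) gives only $\frac{d_{n}}{P_{n}}|\xi+\lambda|<\frac{2P_{k}}{P_{n-1}}$, and the best available comparison is $\frac{2P_{k}}{P_{n-1}}\le 2\prod_{i=k+1}^{n-1}\frac{1}{\Phi(i)}$. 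The constant $2$ strictly exceeds $1+2^{-k}$, so the claimed inequality is not established; concretely, at $n=k+1$ your bound is $2$ while the proposition demands a bound below $1+2^{-k}$. (Your last paragraph is also internally inconsistent: substituting $|\xi+\lambda|<2P_{k}$ into the prefactors $\frac{1}{P_{n-1}},\frac{1}{P_{n-1}},\frac{4}{3P_{n-1}}$ yields the multiples $2,\,2,\,\frac{8}{3}$ of $\frac{P_{k}}{P_{n-1}}$, not $\frac12,\,1,\,\frac43$; and $\frac{P_{k}}{P_{n-1}}$ is not ``precisely what $h(k,n)$ records,'' since $h$ carries the extra parenthetical factor.) The repair is to keep the two contributions separate, which is exactly what the ``$+1$'' structure of $h(k,n)$ encodes and what the paper does: $\frac{|\xi|}{P_{n-1}}\le\frac{1}{P_{n-1}}\le\prod_{j=1}^{n-1}\frac{1}{\Phi(j)}$ (using $\Phi(j)\mid p_{j}$, hence $p_{j}\ge\Phi(j)$), while $\frac{|\lambda|}{P_{n-1}}\le\frac{P_{k}}{P_{n-1}}\le\prod_{i=k+1}^{n-1}\frac{1}{\Phi(i)}$; the sum of these two bounds is exactly $h(k,n)$, and only afterwards does one multiply by the per-case digit ratio.

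There is a second gap in case (i): you invoke only $N_{n}<p_{n}$, which gives $\frac{N_{n}}{p_{n}}<1$ and can never produce the multiplier $\frac12$ appearing in statement (i). You must use the divisibility hypothesis in \textbf{(T1)}: $N_{n}\mid p_{n}$ together with $p_{n}>N_{n}$ forces $p_{n}\ge 2N_{n}$, i.e.\ $\frac{N_{n}}{p_{n}}\le\frac12$, and this is the sole source of the $\frac12$. With these two repairs --- splitting $|\xi|$ from $|\lambda|$ before dividing by $P_{n-1}$, and using $p_{n}\ge 2N_{n}$ --- your argument becomes the paper's proof.
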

 where $h(k,n)=\prod_{i=k+1}^{n-1}\frac{1}{\Phi(i)}\left(\prod_{j=1}^{k}\frac{1}{\Phi(j)}+1\right)$.
 \begin{proof}
  For any $\sigma\in\{-1,1\}^{\mathbb{N}}$, $k\in\mathbb{N}$ and $\lambda\in\Lambda_{k}^{\sigma,\alpha}$, we claim that $\frac{\left|\lambda\right|}{P_{k}}\leq 1$. In fact, from \eqref{1.8}, we know
 \begin{equation*}
 \begin{aligned}
 \frac{\left|\lambda\right|}{P_{k}}\leq& \frac{1}{P_{k}}\left(\sum_{\substack{\Phi(i)=2\\ i\leq k}}P_{i}\,\frac{1}{2^{1+l_{i}}}+\sum_{\substack{\Phi(j)=3\\ j\leq k}}P_{j}\,\frac{1}{3}+\sum_{\substack{\Phi(m)=N_{m}\\ m\leq k}}\frac{1}{2}P_{m}\right)\\
 \leq& \sum_{\substack{\Phi(i)=2\\ i\leq k}}\frac{P_{i}}{P_{k}}\cdot\frac{1}{2}+\sum_{\substack{\Phi(j)=3\\ j\leq k}}\frac{P_{j}}{P_{k}}\cdot\frac{1}{3}+\sum_{\substack{\Phi(m)=N_{m}\\ m\leq k}}\frac{P_{m}}{2P_{k}}\\
 =&\sum_{\substack{\Phi(i)=2\\ i\leq k}}\frac{1}{2p_{i+1}p_{i+2}\cdots p_{k}}+\sum_{\substack{\Phi(j)=3\\ j\leq k}}\frac{1}{3p_{j+1}p_{j+2}\cdots p_{k}}+\sum_{\substack{\Phi(m)=N_{m}\\ m\leq k}}\frac{1}{2p_{m+1}p_{m+2}\cdots p_{k}}\\
 \leq& \sum_{i=1}^{k-1}\frac{1}{2\Phi(i+1)\Phi(i+2)\cdots\Phi(k)}+\frac{1}{2}.
 \end{aligned}
 \end{equation*}
 Set $\beta_{k}(\Phi):=\sum_{i=1}^{k-1} \frac{1}{\Phi(i+1) \cdots \Phi(k)}$ and $\Phi_{\min}=\{\Phi(k)\}_{k=1}^{\infty}$. It is easy to obtain that
 \begin{equation*}
 \beta_{k+1}(\Phi)=\frac{1}{\Phi(k+1)}\left(\beta_{k}(\Phi)+1\right)
 \text{~for all $k \in \mathbb{N}$}.
 \end{equation*}
  Noting that $\Phi_{\min}=2$, we have
 $ \beta_{k}(\Phi) \leq \beta_{k}(\Phi_{\min})\leq 1-\frac{1}{2^{k-1}}\leq 1$, thus the claim follows.

 Now some important facts will be shown. Let $n\geq k+1$, using the fact $\Phi(n)\mid p_{n}$ and the claim, one has
 \begin{equation*}
 \frac{\lambda}{P_{n}}=\frac{1}{p_{n}p_{n-1}\cdots p_{k+1}}\frac{|\lambda|}{P_{k}}\leq \prod_{i=k+1}^{n}\frac{1}{\Phi(i)}.
 \end{equation*}
Suppose $|\xi|<1$. When $\Phi(n)=2$, due to the assumption  $d_{n}<p_{n}$ in $\textbf{(T3)}$, we have
 \begin{equation*}\label{2.6}
 \frac{d_{n}}{P_{n}}\left|\xi+\lambda\right|\leq \frac{d_{n}}{p_{n}}\left(\frac{\left|\xi\right|}{P_{n-1}}+\frac{\left|\lambda\right|}{P_{n-1}}\right)<\prod_{i=k+1}^{n-1}\frac{1}{\Phi(i)}\left(\prod_{j=1}^{k}\frac{1}{\Phi(j)}+1\right).
 \end{equation*}

 Continue the similar procedure to the case $\Phi(n)=3$, notice that $\frac{\left|a_{n}\pm b_{n}\right|}{p_{n}}<\frac{4}{3}$ in $\textbf{(T2)}$, we obtain that
 \begin{equation*}\label{2.7}
 \frac{\left|a_{n}\pm b_{n}\right|}{P_{n}}\left|\xi+\lambda\right|\leq \frac{\left|a_{n}\pm b_{n}\right|}{p_{n}}\left(\frac{\left|\xi\right|}{P_{n-1}}+\frac{\left|\lambda\right|}{P_{n-1}}\right)<\frac{4}{3}\prod_{i=k+1}^{n-1}\frac{1}{\Phi(i)}\left(\prod_{j=1}^{k}\frac{1}{\Phi(j)}+1\right).
 \end{equation*}

 Finally, when $\Phi(n)=N_{n}$, the assumption in $\textbf{(T1)}$ implies $\frac{N_{n}}{p_{n}}\leq\frac{1}{2}$, then
 \begin{equation*}\label{2.8}
 \frac{|N_{n}|}{P_{n}}\left|\xi+\lambda\right|\leq \frac{|N_{n}|}{p_{n}}\left(\frac{|\xi|}{P_{n-1}}+\frac{|\lambda|}{P_{n-1}}\right)\leq\frac{1}{2}\prod_{i=k+1}^{n-1}\frac{1}{\Phi(i)}\left(\prod_{j=1}^{k}\frac{1}{\Phi(j)}+1\right).
 \end{equation*}
  Hence, the conclusions hold.
 \end{proof}

 \section{\bf Proof of Theorem \ref{thm 1.4} \label{sect.3}}
 	
 The main goal of this section is to prove Theorem \ref{thm 1.4}. Firstly, we show that $\Lambda^{\sigma,\alpha}$ is an orthogonal set of $\mu_{\mathcal{P},\mathcal{D}}$.

 	\begin{lem}\label{lem 3.1}
 		Let $\mu_{\mathcal{P},\mathcal{D}}$ be defined by \eqref{1.1}. Suppose the pair $\{\mathcal{P},\mathcal{D}\}$ satisfies $\textbf{(T1)-(T3)}$, then $\Lambda_{n}^{\sigma,\alpha}$ is a spectrum of $\mu_{n}$ and $\Lambda^{\sigma,\alpha}$ is an orthogonal set of $\mu_{\mathcal{P},\mathcal{D}}$ for each $\sigma\in\{-1,1\}^{\mathbb{N}}$ and $\alpha\in\mathbb{D}^{n}$.
 	\end{lem}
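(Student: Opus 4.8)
The plan is to prove the finite-level statement first---that $\Lambda_n^{\sigma,\alpha}$ is a spectrum of $\mu_n$---by a direct-sum argument built from the Hadamard triples of Lemma~\ref{pro3.2}, and then to deduce the orthogonality of the infinite union from the factorization $\hat\mu_{\mathcal{P},\mathcal{D}}=\hat\mu_n\,\hat\mu_{>n}$ supplied by \eqref{1.1}. Write $\Lambda_n^{\sigma,\alpha}=B_1+\cdots+B_n$ for the Minkowski decomposition dictated by \eqref{1.8}, so that $B_i=P_i\{0,\sigma_i/2^{1+l_i}\}$ when $\Phi(i)=2$, $B_j=P_j\{0,1/3,-1/3\}$ when $\Phi(j)=3$, and $B_m=P_m\,\mathcal{D}_m^{*}/N_m$ when $\Phi(m)=N_m$ (the $N_m$ points of the last set being indexed by $\alpha_m\in\mathcal{D}_m^{*}$). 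The first step is to confirm that each $B_k$ is a spectrum of the single factor $\delta_{P_k^{-1}\mathcal{D}_k}$; by Lemma~\ref{pro 1.3} this amounts to checking that every nonzero element of $B_k-B_k$ lies in $\mathcal{Z}(\hat\delta_{P_k^{-1}\mathcal{D}_k})$, which one reads off \eqref{eq1.7}. Concretely, writing $d_i=2^{l_i}d_i'$ with $d_i'$ odd, the case $\Phi(i)=2$ reduces to $\sigma_i d_i'/2\in\mathbb{Z}+\tfrac12$ (using \textbf{(T3)}); the case $\Phi(j)=3$ reduces to the identity $1+e^{2\pi i/3}+e^{-2\pi i/3}=0$ (using $\{a_j,b_j\}\equiv\{\pm1\}\pmod 3$ from \textbf{(T2)}); and the case $\Phi(m)=N_m$ is the classical compatibility of $\{0,\dots,N_m-1\}$ with $\mathcal{D}_m^{*}/N_m$.

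The heart of the argument is to upgrade these factorwise spectra to a spectrum of $\mu_n=\delta_{P_1^{-1}\mathcal{D}_1}*\cdots*\delta_{P_n^{-1}\mathcal{D}_n}$, using $\hat\mu_n=\prod_{k=1}^n\hat\delta_{P_k^{-1}\mathcal{D}_k}$. Take distinct $\lambda,\lambda'\in\Lambda_n^{\sigma,\alpha}$, write $\xi=\lambda-\lambda'=\sum_{k=1}^n\beta_k$ with $\beta_k=P_k(r_k-r_k')\in B_k-B_k$, and let $k_0$ be the smallest index with $\beta_{k_0}\neq0$. Then
\begin{equation*}
\frac{\xi}{P_{k_0}}=(r_{k_0}-r_{k_0}')+\sum_{k>k_0}(p_{k_0+1}\cdots p_k)\,(r_k-r_k').
\end{equation*}
Each summand with $k>k_0$ is an integer: the denominators $2^{1+l_k}$, $3$ and $N_k$ of $r_k-r_k'$ are cleared because \textbf{(T1)-(T3)} force $N_k\mid p_k$, $3\mid p_k$ and $2^{1+l_k}\mid p_k$ (this last divisibility being exactly the hypothesis $2\mid p_k/\gcd(d_k,p_k)$ of \textbf{(T3)}), and $p_k$ already appears in the product $p_{k_0+1}\cdots p_k$. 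Since $\mathcal{D}_{k_0}\subset\mathbb{Z}$ makes $m_{\mathcal{D}_{k_0}}$ periodic with period $1$, these integers may be dropped, and the first step gives $\hat\delta_{P_{k_0}^{-1}\mathcal{D}_{k_0}}(\xi)=0$; hence $\hat\mu_n(\xi)=0$. The same display shows $\xi\neq0$ whenever the two digit strings differ (its leading weight $r_{k_0}-r_{k_0}'$ is a non-integer rational), so $\Lambda_n^{\sigma,\alpha}$ consists of $\prod_{k\le n}\Phi(k)$ distinct points whose exponentials are orthonormal in $L^2(\mu_n)$. As $\mu_n$ is supported on at most $\prod_{k\le n}\Phi(k)$ points, $\dim L^2(\mu_n)\le\prod_{k\le n}\Phi(k)$, whence this orthonormal family is complete and $\Lambda_n^{\sigma,\alpha}$ is a spectrum of $\mu_n$.

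To pass to the infinite convolution, observe that $0\in B_k$ for every $k$, so the finite sets are nested, $\Lambda_1^{\sigma,\alpha}\subseteq\Lambda_2^{\sigma,\alpha}\subseteq\cdots$. Given distinct $\lambda,\lambda'\in\Lambda^{\sigma,\alpha}=\bigcup_n\Lambda_n^{\sigma,\alpha}$, choose $n$ with $\lambda,\lambda'\in\Lambda_n^{\sigma,\alpha}$; the previous paragraph gives $\hat\mu_n(\lambda-\lambda')=0$, and the factorization $\hat\mu_{\mathcal{P},\mathcal{D}}=\hat\mu_n\,\hat\mu_{>n}$ then yields $\hat\mu_{\mathcal{P},\mathcal{D}}(\lambda-\lambda')=0$. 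Thus $(\Lambda^{\sigma,\alpha}-\Lambda^{\sigma,\alpha})\setminus\{0\}\subseteq\mathcal{Z}(\hat\mu_{\mathcal{P},\mathcal{D}})$, which by the orthogonality criterion recorded in Section~\ref{sect.2} means that $\{e^{-2\pi i\lambda x}:\lambda\in\Lambda^{\sigma,\alpha}\}$ is an orthonormal family in $L^2(\mu_{\mathcal{P},\mathcal{D}})$.

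The step I expect to be most delicate is the clearing of denominators in the second paragraph: one must verify that every higher-order contribution $(p_{k_0+1}\cdots p_k)(r_k-r_k')$ is genuinely an integer, and this forces a short case analysis over the type of the index $k>k_0$, invoking each of the three arithmetic conditions in \textbf{(T1)-(T3)} in turn (the powers-of-two bookkeeping for $\Phi(k)=2$ being the only point requiring care). Once this divisibility is in hand, both the orthogonality and the directness of the sum follow from the single display above, and the remaining ingredients---the factorwise zero-set checks and the dimension count---are routine.
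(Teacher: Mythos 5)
Your proof is correct and follows essentially the same route as the paper's: both argue via the first index at which two elements of $\Lambda_n^{\sigma,\alpha}$ differ, clear all later denominators using the divisibilities $N_k\mid p_k$, $3\mid p_k$, $2^{1+l_k}\mid p_k$ from \textbf{(T1)-(T3)}, land the difference in $P_{k_0}\bigl(\mathcal{Z}(m_{\mathcal{D}_{k_0}})+\mathbb{Z}\bigr)\subset\mathcal{Z}(\hat\mu_n)$, upgrade orthogonality to a spectrum of $\mu_n$ by the same dimension count, and pass to $\Lambda^{\sigma,\alpha}$ via the nesting $\Lambda_1^{\sigma,\alpha}\subset\Lambda_2^{\sigma,\alpha}\subset\cdots$ and the factorization $\hat\mu_{\mathcal{P},\mathcal{D}}=\hat\mu_n\,\hat\mu_{>n}$. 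Your write-up is in fact slightly cleaner in two spots: it handles arbitrarily interleaved digit types without the paper's WLOG reordering, and it makes explicit the factorwise zero-set checks (including that $2\mid p_k/\gcd(d_k,p_k)$ is equivalent to $2^{1+l_k}\mid p_k$) that the paper leaves implicit.
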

 	\begin{proof}
 	Recall that
 	\begin{equation*}
 	\Lambda_{n}^{\sigma,\alpha}:=\sum_{\substack{\Phi(i)=2\\i\leq n}}P_{i}\,\bigg\{0,\frac{\sigma_{i}}{2^{1+l_{i}}}\bigg\}+\sum_{\substack{\Phi(j)=3\\j\leq n}}P_{j}\,\bigg\{0,\frac{1}{3},-\frac{1}{3}\bigg\}+\sum_{\substack{\Phi(m)=N_{m}\\m\leq n}}P_{m}\,\frac{\alpha_{m}}{N_{m}}.
 	\end{equation*}
 		For convenience, we set $\mathcal{F}_{i}=\Big\{0,\frac{\sigma_{i}}{2^{1+l_{i}}}\Big\}$, $\mathcal{G}_{m}=\bigg\{\frac{-\lfloor\frac{N_{m}}{2}\rfloor}{N_{m}},\ldots,-\frac{1}{N_m},0,\frac{1}{N_m},\ldots,\frac{N_m-1-\lfloor\frac{N_m}{2}\rfloor}{N_{m}}\bigg\}$ and $\mathcal{M}_{j}=\Big\{0,\frac{1}{3},-\frac{1}{3}\Big\}$ for $1\leq i,j,m\leq n$. Without loss of generality, we assume that $\Phi(1)=\Phi(2)=\cdots=\Phi(k_{1})=2$, $\Phi(k_{1}+1)=\cdots=\Phi(k_{2})=3$ and $\Phi(k_{2}+1)=N_{k_{2}+1},\cdots,\Phi(n)=N_{n}$ for $k_{1},k_{2}\leq n$. Then, for any $\lambda\neq\lambda^{\prime}\in\Lambda_{n}^{\sigma,\sigma^{\prime}}$, we have
 		\begin{equation*}
 		\lambda=\sum_{i=1}^{k_{1}}P_{i}f_{i}+\sum_{j=k_{1}+1}^{k_{2}}P_{j}m_{j}+\sum_{m=k_{2}}^{n}P_{m}g_{m}
 		\end{equation*}
 and
 		 \begin{equation*}
 		 \lambda^{\prime}=\sum_{i=1}^{k_{1}}P_{i}f^{\prime}_{i}+\sum_{j=k_{1}+1}^{k_{2}}P_{j}m^{\prime}_{j}+\sum_{m=k_{2}}^{n}P_{m}g^{\prime}_{m},
 		 \end{equation*}
 	    where $f_{i},f^{\prime}_{i}\in\mathcal{F}_{i},  m_{j}, m^{\prime}_{j}\in\mathcal{M}_{j}, g_{m}, g^{\prime}_{m}\in\mathcal{G}_{m}$. Let $k$ be the first index such that $f_{k}\neq f^{\prime}_{k}$, then
 	    \begin{equation*} \lambda-\lambda^{\prime}=P_{k}\left( f_{k}-f^{\prime}_{k}+\sum_{i=k+1}^{k_{1}}P_{i}P^{-1}_{k}\left(f_{i}-f^{\prime}_{i}\right)+\sum_{j=k_{1}+1}^{k_{2}}P_{j}P^{-1}_{k}\left(m_{j}-m^{\prime}_{j}\right)+\sum_{m=k_{2}+1}^{n}P_{m}P^{-1}_{k}\left(g_{j}-g^{\prime}_{j}\right)\right).
 	   \end{equation*}
 	   It is easy to see that $f_{k}, f^{\prime}_{k}$ and $f_{k}-f^{\prime}_{k}$ belong to $\mathcal{Z}\left(m_{\mathcal{D}_{k}}\right)\left(\mathcal{D}_{k}=\{0,d_{k}\}\right)$. Noting that $2\mid \frac{p_{n}}{\gcd{(d_{n},p_{n})}}$ and $\Phi(n)\mid p_{n}$, we have $$\left(\sum_{i=k+1}^{k_{1}}P_{i}P^{-1}_{k}\left(f_{i}-f^{\prime}_{i}\right)+\sum_{j=k_{1}+1}^{k_{2}}P_{j}P^{-1}_{k}\left(m_{j}-m^{\prime}_{j}\right)+\sum_{m=k_{2}+1}^{n}P_{m}P^{-1}_{k}\left(g_{j}-g^{\prime}_{j}\right)\right)\in\mathbb{Z}.$$ Therefore, $\lambda-\lambda^{\prime}\in P_{k}\left(\mathcal{Z}\left(m_{\mathcal{D}_{k}}\right)+\mathbb{Z}\right)=P_{k}\left(\mathcal{Z}\left(m_{\mathcal{D}_{k}}\right)\right)=\mathcal{Z}\left(\hat{\delta}_{P^{-1}_{k}\mathcal{D}_{k}}\right)\subset\mathcal{Z}\left(\hat{\mu}_{n}\right)$.
 		The above result implies that $\Lambda_{n}^{\sigma,\alpha}$ is an orthogonal set of $\mu_{n}$. Since the dimension of $L^{2}(\mu_{n})$ is 
 		equivalent to the cardinality of the set $\Lambda_{n}^{\sigma,\alpha}$, we conclude that $\Lambda_{n}^{\sigma,\alpha}$ is a spectrum of $\mu_{n}$.

 		Next, we prove that $\Lambda^{\sigma,\alpha}$ is an orthogonal set of $\mu_{\mathcal{P},\mathcal{D}}$. Fix $\sigma\in\{-1,1\}^{\mathbb{N}}$ and $\alpha\in\mathbb{D}_{n}$. Since $0\in\Lambda^{\sigma,\alpha}_{n}$ for all $n$, we have the following inclusion relation
 \begin{equation*}	\Lambda^{\sigma,\alpha}_{1}\subset\Lambda^{\sigma,\alpha}_{2}\subset\Lambda^{\sigma,\alpha}_{3}\ldots.
 \end{equation*}
For $\lambda\neq \lambda^{\prime}\in\Lambda^{\sigma,\alpha}$, there exists an integer $n$ such that $\lambda,\lambda^{\prime}\in\Lambda^{\sigma,\alpha}_{n}$. As $\Lambda^{\sigma,\alpha}_{n}$ is a spectrum of $\mu_{n}$, we have $\hat{\mu}_{n}\left(\lambda-\lambda^{\prime}\right)=0$. According to the fact that $\mu_{\mathcal{P},\mathcal{D}}=\mu_{n}*\mu_{>n}$, we obtain
 		\begin{equation*}
 		\hat{\mu}_{\mathcal{P},\mathcal{D}}\left(\lambda-\lambda^{\prime}\right)=\hat{\mu}_{n}\left(\lambda-\lambda^{\prime}\right)\cdot\hat{\mu}_{>n}\left(\lambda-\lambda^{\prime}\right)=0.
 		\end{equation*}
 		Thus $\Lambda^{\sigma,\alpha}$ is an orthogonal set of $\mu_{\mathcal{P},\mathcal{D}}$.
 	\end{proof}

%
%

 Now we can  obtain the following lemma which gives a sufficient condition for the spectrality of $\mu_{\mathcal{P},\mathcal{D}}$. The proof follows a similar pattern to that of \cite[Theorem 2.3]{AHH} or \cite[Theorem 1.4]{LMW2}, thus we omit it.

 \begin{lem}\label{lem 4.3}
 Let $\sigma\in\{-1,1\}^{\mathbb{N}}$. Suppose that there exists an increasing sequence $\{n_{k}\}_{k=1}^{\infty}\subset\mathbb{N}$ and $\varepsilon>0, \delta>0$ such that $\left|\hat{\mu}_{>n_{k}}\left(\xi+\lambda\right)\right|>\varepsilon$ for all $\left|\xi\right|<\delta$ and $\lambda\in\Lambda^{\sigma,\alpha}_{n_{k}}$. Then $\Lambda^{\sigma,\alpha}$ is a spectrum of $\mu_{\mathcal{P},\mathcal{D}}$.
 \end{lem}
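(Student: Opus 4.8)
The plan is to verify the Jorgensen--Pedersen criterion of Lemma \ref{lem 3.2}(ii): it suffices to prove that
$$Q_{\Lambda^{\sigma,\alpha}}(\xi)=\sum_{\lambda\in\Lambda^{\sigma,\alpha}}\left|\hat{\mu}_{\mathcal{P},\mathcal{D}}(\xi+\lambda)\right|^{2}\equiv 1\qquad(\xi\in\mathbb{R}).$$
From Lemma \ref{lem 3.1}, $\Lambda^{\sigma,\alpha}$ is an orthogonal set of $\mu_{\mathcal{P},\mathcal{D}}$, each $\Lambda_{n_{k}}^{\sigma,\alpha}$ is a spectrum of $\mu_{n_{k}}$, and $\Lambda_{n_{1}}^{\sigma,\alpha}\subset\Lambda_{n_{2}}^{\sigma,\alpha}\subset\cdots$ with $\bigcup_{k}\Lambda_{n_{k}}^{\sigma,\alpha}=\Lambda^{\sigma,\alpha}$. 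I would first record the easy inequality $Q_{\Lambda^{\sigma,\alpha}}\leq 1$: using $\hat{\mu}_{\mathcal{P},\mathcal{D}}=\hat{\mu}_{n_{k}}\hat{\mu}_{>n_{k}}$, $|\hat{\mu}_{>n_{k}}|\leq 1$, and the fact that $\Lambda_{n_{k}}^{\sigma,\alpha}$ is a spectrum of $\mu_{n_{k}}$, every partial sum obeys $\sum_{\lambda\in\Lambda_{n_{k}}^{\sigma,\alpha}}|\hat{\mu}_{\mathcal{P},\mathcal{D}}(\xi+\lambda)|^{2}\leq\sum_{\lambda\in\Lambda_{n_{k}}^{\sigma,\alpha}}|\hat{\mu}_{n_{k}}(\xi+\lambda)|^{2}=1$, and letting $k\to\infty$ by monotone convergence gives $Q_{\Lambda^{\sigma,\alpha}}\leq 1$ on $\mathbb{R}$. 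By Lemma \ref{lem 3.2}(iii) this function is entire, and since $0\in\Lambda^{\sigma,\alpha}$ and $\hat{\mu}_{\mathcal{P},\mathcal{D}}(0)=1$ it already equals $1$ at the origin (indeed on all of $-\Lambda^{\sigma,\alpha}$). Thus the entire content is the reverse bound $Q_{\Lambda^{\sigma,\alpha}}\geq 1$.

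Next I would feed in the hypothesis to obtain a uniform positive lower bound near $0$. Fix $|\xi|<\delta$; restricting the defining series to $\Lambda_{n_{k}}^{\sigma,\alpha}$, factoring the transform, and applying the assumption $|\hat{\mu}_{>n_{k}}(\xi+\lambda)|>\varepsilon$ together with Lemma \ref{lem 3.1} gives
\begin{align*}
Q_{\Lambda^{\sigma,\alpha}}(\xi)
&\geq\sum_{\lambda\in\Lambda_{n_{k}}^{\sigma,\alpha}}\left|\hat{\mu}_{n_{k}}(\xi+\lambda)\right|^{2}\left|\hat{\mu}_{>n_{k}}(\xi+\lambda)\right|^{2}\\
&\geq\varepsilon^{2}\sum_{\lambda\in\Lambda_{n_{k}}^{\sigma,\alpha}}\left|\hat{\mu}_{n_{k}}(\xi+\lambda)\right|^{2}=\varepsilon^{2}.
\end{align*}
Hence $Q_{\Lambda^{\sigma,\alpha}}\geq\varepsilon^{2}>0$ throughout $(-\delta,\delta)$.

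The main obstacle is to upgrade this constant lower bound $\varepsilon^{2}$ to the exact value $1$ everywhere. The difficulty is structural: as $k\to\infty$ the frequencies $\xi+\lambda$ with $\lambda\in\Lambda_{n_{k}}^{\sigma,\alpha}$ escape to infinity, so one cannot simply let $|\hat{\mu}_{>n_{k}}|\to 1$ inside the sum, and equi-positivity only furnishes a uniform floor, not convergence to $1$; moreover $Q_{\Lambda^{\sigma,\alpha}}=1$ on the discrete set $-\Lambda^{\sigma,\alpha}$ is too sparse for analyticity to force $Q_{\Lambda^{\sigma,\alpha}}\equiv 1$ by itself. To close the gap I would follow the scheme of \cite[Theorem 2.3]{AHH} and \cite[Theorem 1.4]{LMW2}: argue by contradiction, supposing $Q_{\Lambda^{\sigma,\alpha}}(\xi_{0})=1-\eta<1$ at some $\xi_{0}$. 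Exploiting the self-similar splitting $\Lambda^{\sigma,\alpha}=\Lambda_{n_{k}}^{\sigma,\alpha}+P_{n_{k}}\Lambda'$ (with $\Lambda'$ the analogous set for the shifted data) and the factorization $\hat{\mu}_{\mathcal{P},\mathcal{D}}=\hat{\mu}_{n_{k}}\hat{\mu}_{>n_{k}}$, the rescaled tails $\nu_{k}$ defined by $\hat{\nu}_{k}(\eta)=\hat{\mu}_{>n_{k}}(P_{n_{k}}\eta)$ are supported in a fixed compact set (by the standing assumption $\sup_{n}\sup_{d\in\mathcal{D}_{n}}|d|/p_{n}<\infty$), hence form a weakly precompact, equi-positive family. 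One then transports a neighborhood of $\xi_{0}$ into the good region $\{|\xi|<\delta\}$ along this scaling, passes to a weak-$*$ limit of the $\nu_{k}$, and uses the real-analyticity of $Q_{\Lambda^{\sigma,\alpha}}$ (so that $1-Q_{\Lambda^{\sigma,\alpha}}\geq 0$ has isolated zeros unless it vanishes identically) to conclude $\eta=0$. This gives $Q_{\Lambda^{\sigma,\alpha}}\equiv 1$, and Lemma \ref{lem 3.2}(ii) then yields that $\Lambda^{\sigma,\alpha}$ is a spectrum of $\mu_{\mathcal{P},\mathcal{D}}$. The delicate step to carry out in full is precisely the weak-$*$ compactness and equi-positivity of the rescaled tail family, where the convergence and the uniform floor $\varepsilon$ must be reconciled.
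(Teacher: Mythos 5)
Your reduction to the Jorgensen--Pedersen criterion, the upper bound $Q_{\Lambda^{\sigma,\alpha}}\leq 1$, and the floor $Q_{\Lambda^{\sigma,\alpha}}\geq\varepsilon^{2}$ on $(-\delta,\delta)$ are all correct, but the proof stops exactly at the crux, which you yourself flag as "the delicate step to carry out in full." The compactness/contradiction scheme you sketch is not a proof as written: assuming $Q_{\Lambda^{\sigma,\alpha}}(\xi_{0})=1-\eta<1$, the remark that a nonnegative real-analytic function has isolated zeros gives no contradiction (such a function may perfectly well be positive at $\xi_{0}$), and no mechanism is supplied that transports $\xi_{0}$ into the good region $(-\delta,\delta)$ while controlling the value of $Q_{\Lambda^{\sigma,\alpha}}$; the weak-$*$ limit of the rescaled tails is invoked but never used to produce an inequality. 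So there is a genuine gap: you never establish $Q_{\Lambda^{\sigma,\alpha}}\equiv 1$ on any open set, which is the only thing the analyticity of $Q_{\Lambda^{\sigma,\alpha}}$ (Lemma \ref{lem 3.2}(iii)) needs in order to finish.

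The step you are missing — and this is what the proofs cited by the paper (\cite[Theorem 2.3]{AHH}, \cite[Theorem 1.4]{LMW2}) actually do — is a dominated convergence argument in which the hypothesis $|\hat{\mu}_{>n_{k}}(\xi+\lambda)|>\varepsilon$ serves as the \emph{dominating bound}, not as a floor to be improved. Fix $|\xi|<\delta$. Since $\Lambda^{\sigma,\alpha}_{n_{k}}$ is a spectrum of $\mu_{n_{k}}$ (Lemma \ref{lem 3.1}) and $\hat{\mu}_{>n_{k}}(\xi+\lambda)\neq 0$ by hypothesis,
\[
1=\sum_{\lambda\in\Lambda^{\sigma,\alpha}_{n_{k}}}\left|\hat{\mu}_{n_{k}}(\xi+\lambda)\right|^{2}
=\sum_{\lambda\in\Lambda^{\sigma,\alpha}_{n_{k}}}\frac{\left|\hat{\mu}_{\mathcal{P},\mathcal{D}}(\xi+\lambda)\right|^{2}}{\left|\hat{\mu}_{>n_{k}}(\xi+\lambda)\right|^{2}}
=\sum_{\lambda\in\Lambda^{\sigma,\alpha}}f_{k}(\lambda),
\qquad
f_{k}(\lambda):=\frac{\left|\hat{\mu}_{\mathcal{P},\mathcal{D}}(\xi+\lambda)\right|^{2}}{\left|\hat{\mu}_{>n_{k}}(\xi+\lambda)\right|^{2}}\,\mathbf{1}_{\Lambda^{\sigma,\alpha}_{n_{k}}}(\lambda).
\]
The hypothesis gives $0\leq f_{k}(\lambda)\leq\varepsilon^{-2}\left|\hat{\mu}_{\mathcal{P},\mathcal{D}}(\xi+\lambda)\right|^{2}$, which is summable over $\Lambda^{\sigma,\alpha}$ with total mass at most $\varepsilon^{-2}$ by your bound $Q_{\Lambda^{\sigma,\alpha}}\leq 1$; and for each fixed $\lambda$ one has $f_{k}(\lambda)\to\left|\hat{\mu}_{\mathcal{P},\mathcal{D}}(\xi+\lambda)\right|^{2}$ as $k\to\infty$, because eventually $\lambda\in\Lambda^{\sigma,\alpha}_{n_{k}}$ and $\hat{\mu}_{>n_{k}}(\xi+\lambda)\to 1$ (the supports of $\mu_{>n}$ shrink to $\{0\}$ under the standing assumption $\sup_{n}\sup_{d\in\mathcal{D}_{n}}|d|/p_{n}<\infty$, so $\mu_{>n}\to\delta_{0}$ weakly). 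Your worry that "the frequencies escape to infinity" is precisely what the domination neutralizes: the tail of the sum is uniformly small, and each individual term converges. Dominated convergence then gives $Q_{\Lambda^{\sigma,\alpha}}(\xi)=1$ for every $|\xi|<\delta$, and since $Q_{\Lambda^{\sigma,\alpha}}$ is entire by Lemma \ref{lem 3.2}(iii), it equals $1$ identically; Lemma \ref{lem 3.2}(ii) concludes that $\Lambda^{\sigma,\alpha}$ is a spectrum of $\mu_{\mathcal{P},\mathcal{D}}$. No weak-$*$ compactness or equi-positive family is needed for this lemma.
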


 Noting that
  $\left|\hat{\mu}_{>n_{k}}\right|=\left|\hat{\delta}_{P^{-1}_{n_{k}+1}\mathcal{D}_{n_{k}+1}}\right|\cdot\left|\hat{\mu}_{>n_{k}+1}\right|$,
  we can obtain a non-zero bound of $\left|\hat{\mu}_{>n_{k}+1}\right|$ under some assumptions, which is what the following result shows. In this manner, it remains to consider the lower bound of $\left|\hat{\delta}_{P^{-1}_{n_{k}+1}\mathcal{D}_{n_{k}+1}}\right|$, it will be discussed in Lemma \ref{lem 5.3} and Lemma \ref{lem 4.6}.


 \begin{prop}\label{lem 4.4}
 Let $\{n_{k}\}_{k=1}^{\infty}\subset\mathbb{N}_{>7}$ be an increasing sequence such that $\Phi(n_{k}+1)\geq 3$.  Suppose the pair $\{\mathcal{P},\mathcal{D}\}$ satisfies $\textbf{(T1)-(T3)}$, then there exists $\epsilon>0$ such that
 \begin{equation*}
 \prod_{j=2}^{\infty}\left|m_{\mathcal{D}_{n_k}+j}\left(\frac{\xi+\lambda}{P_{n_k+j}}\right)\right|>\epsilon,
 \end{equation*}
 for all $|\xi|<1$ and $\lambda\in\Lambda_{n_{k}}^{\sigma,\alpha}$ for all $\sigma\in\{-1,1\}^{N^{*}}$.
 \end{prop}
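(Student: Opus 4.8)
The plan is to bound every factor from below and then show the resulting infinite product stays away from $0$ uniformly in $k,\xi,\lambda,\sigma$. Since $\bigl|m_{\mathcal{D}_{n}}(P_{n}^{-1}\zeta)\bigr|=\bigl|\hat{\delta}_{P_{n}^{-1}\mathcal{D}_{n}}(\zeta)\bigr|$, the quantity to estimate is $\prod_{j\geq 2}\bigl|\hat{\delta}_{P_{n_{k}+j}^{-1}\mathcal{D}_{n_{k}+j}}(\xi+\lambda)\bigr|$, and the weights controlling each factor come from Proposition \ref{pro 2.4}. The first step is to record that $h(n_{k},n_{k}+j)$ decays geometrically in $j$ and uniformly in $k$: because every $\Phi(i)\geq 2$, because $\Phi(n_{k}+1)\geq 3$, and because $n_{k}>7$ forces $\prod_{l=1}^{n_{k}}\Phi(l)^{-1}\leq 2^{-8}$, one gets $h(n_{k},n_{k}+j)\leq \tfrac{257}{768}\,2^{-(j-2)}$ for all $j\geq 2$; in particular $h(n_{k},n_{k}+j)$ is bounded by a fixed constant strictly below $\tfrac12$, with a gap independent of $k$. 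This uniform gap is exactly the role of the hypothesis $n_{k}>7$.

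Next I would split the product at a fixed index $J$, chosen uniformly in $k$ so large that $h(n_{k},n_{k}+j)$ is small for $j\geq J$. For the tail $j\geq J$ every factor admits a quadratic lower bound $\bigl|\hat{\delta}_{P_{n_{k}+j}^{-1}\mathcal{D}_{n_{k}+j}}(\xi+\lambda)\bigr|\geq 1-C\,h(n_{k},n_{k}+j)^{2}$ for a universal $C>0$: for $\Phi=N_{n}$ and $\Phi=2$ this is Lemma \ref{lem2.44}(i),(ii) combined with Proposition \ref{pro 2.4}, while for $\Phi=3$ I would start from \eqref{2.2}, write the cosine product as $f(X,Y)$ with $X=\pi a_{n}(\xi+\lambda)/P_{n}$ and $Y=\pi b_{n}(\xi+\lambda)/P_{n}$, observe $|X|,|Y|,|X-Y|<\tfrac{4\pi}{3}h(n_{k},n_{k}+j)<\tfrac{\pi}{2}$ (so all three cosines are positive), and use $\cos X\cos Y\cos(X-Y)\geq 1-\tfrac12\bigl(X^{2}+Y^{2}+(X-Y)^{2}\bigr)$. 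Since $\sum_{j\geq J}h(n_{k},n_{k}+j)^{2}$ is dominated by a convergent geometric series whose sum does not depend on $k$, the tail product is at least $\exp\!\bigl(-2C\sum_{j\geq J}h(n_{k},n_{k}+j)^{2}\bigr)\geq \epsilon_{1}>0$.

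It remains to bound the finitely many factors $2\leq j<J$ below by a positive constant. For $\Phi=2$ and $\Phi=N_{n}$ the quadratic estimate above is still positive, because $h(n_{k},n_{k}+j)<\tfrac12$ keeps the relevant cosine uniformly away from $0$. The delicate factors are those with $\Phi=3$, where $h(n_{k},n_{k}+2)$ is too large for the quadratic bound and the cosine product may be far from $1$. For these I would invoke Lemma \ref{lem 2.1}: the estimates of Proposition \ref{pro 2.4} give $XY>0$ together with $|X|,|Y|<\tfrac{4\pi}{3}h(n_{k},n_{k}+j)<\tfrac{2\pi}{3}$, so the point $(X,Y)$ stays in a compact region that omits all eight minimal points of $f$; hence $1+8f(X,Y)\geq c>0$ and the factor is at least $\sqrt{c}/3$. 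Multiplying the finitely many positive constants gives $\epsilon_{2}>0$, and therefore $\prod_{j\geq 2}\bigl|\hat{\delta}_{P_{n_{k}+j}^{-1}\mathcal{D}_{n_{k}+j}}(\xi+\lambda)\bigr|\geq \epsilon_{1}\epsilon_{2}=:\epsilon>0$.

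The main obstacle is precisely this $\Phi=3$, small-$j$ factor: the naive quadratic estimate collapses once the weight is not small, so one must fall back on the minimum-point analysis of $f$ from Lemma \ref{lem 2.1}. Everything depends on the two structural hypotheses acting together, since $\Phi(n_{k}+1)\geq 3$ supplies the initial factor $\tfrac13$ and $n_{k}>7$ supplies the estimate $\prod_{l\leq n_{k}}\Phi(l)^{-1}\leq 2^{-8}$; jointly they force $(X,Y)$ into $\{\,|X|,|Y|<2\pi/3\,\}$ and keep the $\Phi=2$ cosines positive, with all constants independent of $k$.
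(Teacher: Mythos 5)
Your proof is correct, and its skeleton is the same as the paper's: uniform geometric decay of the weight $h(n_k,n_k+j)$ from Proposition \ref{pro 2.4} (with $\Phi(n_k+1)\geq 3$ supplying the factor $\tfrac13$ and $n_k>7$ supplying uniformity in $k$), followed by per-factor quadratic cosine bounds multiplied over $j\geq 2$. The genuine difference lies in the three-element factors. The paper never invokes Lemma \ref{lem 2.1} in this proposition: it bounds $\bigl|\hat{\delta}_{P_j^{-1}\{0,a_j,b_j\}}\bigr|$ via Lemma \ref{lem2.44}(iii), namely $\tfrac13\bigl(\cos\omega_{+}+2\cos\omega_{-}\bigr)\geq 1-\tfrac12\max\bigl(\omega_{+}^2,\omega_{-}^2\bigr)$ with $|\omega_{\pm}|\leq\tfrac{43\pi}{96}$, and this stays positive for \emph{every} $j\geq 2$ only because of the razor-thin numerical margin $\tfrac{43\pi}{96}<\sqrt{2}$; in the paper, Lemma \ref{lem 2.1} is reserved for the $j=1$ factor, i.e.\ Lemma \ref{lem 5.3}. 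You instead start from \eqref{2.2} with the cruder inequality $\cos X\cos Y\cos(X-Y)\geq 1-\tfrac12\bigl(X^2+Y^2+(X-Y)^2\bigr)$, which indeed fails to be positive at $j=2$, and you repair exactly that case by the minimal-point/compactness argument of Lemma \ref{lem 2.1} on the region $\bigl\{XY\geq 0,\ |X|,|Y|\leq\tfrac{257\pi}{576}\bigr\}$ — a region that misses all eight minima, since those with $XY>0$ have a coordinate equal to $\tfrac{2\pi}{3}>\tfrac{257\pi}{576}$. Both routes are sound: the paper's is fully explicit but leans on a tight numerical coincidence, while yours has ample margin below $\tfrac{2\pi}{3}$ at the price of a non-explicit constant and an extra appeal to Lemma \ref{lem 2.1}. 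Two cosmetic remarks: your ``$XY>0$'' should be ``$XY\geq 0$'' (the case $\xi+\lambda=0$ gives $f=1$ and is harmless), and your split at $J$ with $1-x\geq e^{-2x}$ is just a repackaging of the paper's direct convergence of the product $\prod_j\bigl(1-Ch_j^2\bigr)$, so it adds no real generality.
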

 \begin{proof}
 By definition,
 \begin{equation*}
 \prod_{j=2}^{\infty}\left|m_{\mathcal{D}_{n_k}+j}(\frac{\cdot}{P_{n_k+j}})\right|=\prod_{\Phi(i)=2\atop i\geq n_{k}+2}\left|\hat{\delta}_{P^{-1}_{i}\{0,d_{i}\}}(\cdot)\right|\cdot \prod_{\Phi(j)=3\atop j\geq n_{k}+2}\left|\hat{\delta}_{P^{-1}_{j}\{0,a_{j},b_{j}\}}(\cdot)\right|\cdot
 \prod_{\Phi(m)=N_{m}\atop m\geq n_{k}+2}\left|\hat{\delta}_{P^{-1}_{m}\{0,1,\ldots,N_{m}-1\}}(\cdot)\right|.
 \end{equation*}

 Let $|\xi|<1$ and $\lambda\in\Lambda_{n_{k}}^{\sigma,\alpha}$ for all $\sigma\in\{-1,1\}^{N^{*}}$.
 Now we estimate the lower bound of $\left|\hat{\delta}_{P^{-1}_{i}\{0,d_{i}\}}\left(\xi+\lambda\right)\right|$, $\left|\hat{\delta}_{P^{-1}_{j}\{0,a_{j},b_{j}\}}\left(\xi+\lambda\right)\right|$ and $\left|\hat{\delta}_{P^{-1}_{m}\{0,1,\ldots,N_{m}-1\}}\left(\xi+\lambda\right)\right|$ respectively for all $\Phi(i)=2$, $\Phi(j)=3$, $\Phi(m)=N_{m}$, $i,j,m\geq n_{k}+2$.

 We first pay attention to $\left|\hat{\delta}_{P^{-1}_{j}\{0,a_{j},b_{j}\}}\left(\xi+\lambda\right)\right|, j\geq n_{k}+2$.
 By Proposition \ref{pro 2.4} $(iii)$ and the assumption $\Phi(n_{k}+1)\geq 3$, clearly,
 \begin{equation}\label{1}
 \pi\frac{\left|a_{j}\pm b_{j}\right|}{P_{j}}\left|\xi+\lambda\right|\leq \frac{4}{3}\cdot\frac{\pi}{3\cdot 2^{j-2-n_{k}}}\left(1+\frac{1}{2^{n_{k}}}\right)
 \leq\frac{4\pi}{9\cdot 2^{j-9}}\left(1+\frac{1}{2^7}\right)\leq \frac{43\pi}{96}.
 \end{equation}

 From Lemma \ref{lem2.44} $(iii)$, we see that
 \begin{equation}\label{2}
 \begin{aligned}
 \left|\hat{\delta}_{P^{-1}_{j}\{0,a_{j},b_{j}\}}\left(\xi+\lambda\right)\right|
 \geq&\frac{1}{3}\left(\cos\left(\pi \frac{\left(a_{j}+b_{j}\right)\left(\xi+\lambda\right)}{P_{j}}\right)+2\cos\left(\pi \frac{\left(a_{j}-b_{j}\right)\left(\xi+\lambda\right)}{P_{j}}\right)\right)\\
 \geq&\frac{1}{3}\left(1-\frac{1}{2}\left(\pi \frac{\left(a_j+b_j\right)\left(\xi+\lambda\right)}{P_j}\right)^{2}+2\left(1-\frac{1}{2}\left(\pi \frac{\left(a_j-b_j\right)\left(\xi+\lambda\right)}{P_j}\right)^{2}\right)\right)\\
 \geq& 1-\frac{1}{2}\left(\pi \frac{\left(a_j\pm b_j\right)\left(\xi+\lambda\right)}{P_j}\right)^{2},
 \end{aligned}
 \end{equation}
 where the first two inequalities come from $\frac{\left|a_{j}\pm b_{j}\right|}{P_{j}}\left|\xi+\lambda\right|<\frac{1}{2}$ and $\cos(x)\geq 1-\frac{1}{2}x^{2}$. Together with \eqref{1} and \eqref{2}, one has
 \begin{equation*}
 \left|\hat{\delta}_{P^{-1}_{j}\{0,a_{j},b_{j}\}}\left(\xi+\lambda\right)\right|\geq 1-\frac{1}{2}\left(\frac{4\pi}{9\cdot 2^{j-9}}\left(1+\frac{1}{2^{7}}\right)\right)^{2}
 \geq 1-\frac{1}{2}\left(\frac{43\pi}{96}\right)^{2}>0.
 \end{equation*}

 Now we deal with $\left|\hat{\delta}_{P^{-1}_{i}\{0,d_{i}\}}\left(\xi+\lambda\right)\right|$, $i\geq n_{k}+2$ and $\Phi(i)=2$.
 Similarly, according to Proposition \ref{pro 2.4} $(ii)$,
 \begin{equation*}
 \pi\frac{d_{i}\left|\xi+\lambda\right|}{P_{i}}
 \leq\frac{\pi}{3\cdot 2^{i-9}}\left(1+\frac{1}{2^{7}}\right)
 <\frac{4\pi}{9\cdot 2^{i-9}}\left(1+\frac{1}{2^7}\right),
 \end{equation*}
 thus from Lemma \ref{lem2.44} $(ii)$, we know that
 \begin{equation*}
 \left|\hat{\delta}_{P_{i}^{-1}\{0,d_{i}\}}\left(\xi+\lambda\right)\right|>1-\frac{1}{2}\left(\frac{4\pi}{9\cdot 2^{i-9}}\left(1+\frac{1}{2^{7}}\right)\right)^{2}>0.
 \end{equation*}

 Finally, continue the similar procedure to  $\left|\hat{\delta}_{P^{-1}_{m}\{0,1,\ldots,N_{m}-1\}}\left(\xi+\lambda\right)\right|$, $m\geq  n_{k}+2$ and $\Phi(m)=N_{m}$.
 It follows from the assumption and Proposition \ref{pro 2.4} $(i)$ that
 \begin{equation}\label{4.9}
 \frac{N_{m}\pi \left|\xi+\lambda\right|}{P_{m}}\leq\frac{1}{2}\cdot\frac{\pi}{3\cdot 2^{m-9}}\left(1+\frac{1}{2^{7}}\right)<\frac{4\pi}{9\cdot 2^{m-9}}\left(1+\frac{1}{2^{7}}\right),
 \end{equation}
 combining \eqref{4.9} and Lemma \ref{lem2.44} $(i)$, we obtain that
 \begin{equation*}
 \left|\delta_{{P^{-1}_{m}}\{0,1, \ldots, N_{m}-1\}}\left(\xi+\lambda\right)\right|
 >1-\frac{1}{2}\left(\frac{4\pi}{9\cdot 2^{m-9}}\left(1+\frac{1}{2^{7}}\right)\right)^{2}>0.
 \end{equation*}
 Based on the above discussion, we conclude that
 \begin{equation*}
 \prod_{j=2}^{\infty}\left|m_{\mathcal{D}_{n_k}+j}\left(\frac{\xi+\lambda}{P_{n_k+j}}\right)\right|\geq C_{0},
 \end{equation*}
 where $C_{0}=\prod\limits_{\Phi(i)=2~\text{or}~3~\text{or}~N_{i}\atop i\geq n_{k}+2}\left(1-\frac{1}{2}\left(\frac{43\pi}{96\cdot 2^{i-9}}\right)^{2}\right)\in(0,1)$ is a constant which is independent of $n_{k}$. The proof is complete.
 \end{proof}

 To reach the desired conclusion, it is required to estimate the lower bound of $\left|m_{\mathcal{D}_{n_k+1}}\left(\frac{\xi+\lambda}{P_{n_k+1}}\right)\right|$.
 In fact, under the assumption $\sup_{n\geq 1}\frac{b_{n}}{p_{n}}<\frac{2}{3}$ in $\textbf{(T1)}$, we first obtain an expected lower bound estimate of $\left|\hat{\delta}_{P_{n_k+1}^{-1}\{0,a_{n_{k}+1},b_{n_{k}+1}\}}\left(\xi+\lambda\right)\right|$.

 \begin{lem}\label{lem 5.3}
Let $0<a_n<b_n$ for all $n\geq 1$. Suppose there exists an increasing subsequence $\{n_{k}\}_{k=1}^{\infty}$ such that $\varlimsup\limits_{k\rightarrow\infty}\frac{b_{n_{k}+1}}{p_{n_{k}+1}}<\frac{2}{3}$, then there exists $\epsilon>0$ and $N\in\mathbb{N}$ such that when $k>N$,

 \begin{equation}
 \left|\hat{\delta}_{P_{n_{k}+1}^{-1}\{0,a_{n_{k}+1},b_{n_{k}+1}\}}\left(\xi+\lambda\right)\right|\geq \epsilon,
 \end{equation}
 for all $|\xi|<1$ and $\lambda\in\Lambda_{n_k}^{\sigma,\alpha}$.
 \end{lem}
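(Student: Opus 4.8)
The plan is to reduce the estimate to the two-variable cosine function $f(x,y)=\cos x\cos y\cos(x-y)$ analysed in Lemma \ref{lem 2.1}, and then exploit the precise location of its minimal points. Writing $n=n_k+1$ and applying the second form \eqref{2.2} of the mask from Lemma \ref{lem2.44} with argument $x=\xi+\lambda$, I would set
\begin{equation*}
x_k=\pi\frac{a_{n_k+1}(\xi+\lambda)}{P_{n_k+1}},\qquad y_k=\pi\frac{b_{n_k+1}(\xi+\lambda)}{P_{n_k+1}},
\end{equation*}
so that
\begin{equation*}
\left|\hat{\delta}_{P_{n_k+1}^{-1}\{0,a_{n_k+1},b_{n_k+1}\}}(\xi+\lambda)\right|^2=\frac{1}{9}\left|1+8f(x_k,y_k)\right|.
\end{equation*}
Since the global minimum of $f$ equals $-\tfrac18$ (proof of Lemma \ref{lem 2.1}), the quantity $1+8f$ is nonnegative everywhere, so the absolute value may be dropped and it suffices to bound $1+8f(x_k,y_k)$ away from $0$ uniformly.

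Next I record the two structural features that feed into Lemma \ref{lem 2.1}. Because $0<a_{n_k+1}<b_{n_k+1}$ and $P_{n_k+1}>0$, both $x_k$ and $y_k$ carry the sign of $\xi+\lambda$, hence $x_ky_k\ge 0$ (and $x_ky_k>0$ unless $\xi+\lambda=0$, in which case $f=1$ and nothing is to prove), and moreover $|x_k|\le|y_k|$. For the size of $y_k$ I would invoke the claim established inside the proof of Proposition \ref{pro 2.4}, namely $\frac{|\lambda|}{P_{n_k}}\le 1$ for every $\lambda\in\Lambda_{n_k}^{\sigma,\alpha}$; combined with $|\xi|<1$ this gives
\begin{equation*}
\frac{|\xi+\lambda|}{P_{n_k}}\le \frac{|\xi|}{P_{n_k}}+\frac{|\lambda|}{P_{n_k}}\le 1+\frac{1}{P_{n_k}},
\end{equation*}
and therefore, using $P_{n_k+1}=p_{n_k+1}P_{n_k}$, $|y_k|=\pi\frac{b_{n_k+1}}{p_{n_k+1}}\cdot\frac{|\xi+\lambda|}{P_{n_k}}\le \pi\frac{b_{n_k+1}}{p_{n_k+1}}\bigl(1+\frac{1}{P_{n_k}}\bigr)$. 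Since $P_{n_k}\to\infty$ and $\varlimsup_{k}\frac{b_{n_k+1}}{p_{n_k+1}}<\frac23$ by hypothesis, I obtain a constant $c<\frac{2\pi}{3}$ and an index $N$ with $|x_k|\le|y_k|\le c$ for all $k>N$ and all admissible $(\xi,\lambda)$; note that for each fixed $k$ and fixed $\sigma,\alpha$ the set $\Lambda_{n_k}^{\sigma,\alpha}$ is finite, so this is a genuine uniform bound.

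Finally I would conclude with the minimal-point analysis of Lemma \ref{lem 2.1}. For $k>N$ the pairs $(x_k,y_k)$ all lie in the compact same-sign box $K_c=\{(x,y):xy\ge 0,\ |x|\le c,\ |y|\le c\}$. Among the eight minimal points of $f$ listed in the proof of Lemma \ref{lem 2.1}, the four with $xy\ge 0$ are $(\pm\frac{2\pi}{3},\pm\frac{\pi}{3})$ and $(\pm\frac{\pi}{3},\pm\frac{2\pi}{3})$ (consistent signs), each having a coordinate of modulus $\frac{2\pi}{3}$; since $c<\frac{2\pi}{3}$, none belongs to $K_c$. Hence $1+8f>0$ on $K_c$, and by compactness $m:=\min_{K_c}(1+8f)>0$. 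Taking $\epsilon=\frac13\sqrt{m}$ then yields $\left|\hat{\delta}_{P_{n_k+1}^{-1}\{0,a_{n_k+1},b_{n_k+1}\}}(\xi+\lambda)\right|\ge\epsilon$ for all $k>N$, as required. The main obstacle is exactly the strictness in the uniform asymptotic bound $\varlimsup_k\sup_{\xi,\lambda}|y_k|<\frac{2\pi}{3}$: it is this strict inequality — powered by the hypothesis $\varlimsup\frac{b_{n_k+1}}{p_{n_k+1}}<\frac23$ together with $\frac{|\lambda|}{P_{n_k}}\le 1$ and $\frac{1+|\xi|}{P_{n_k}}\to 0$ — that keeps the arguments away from the dangerous minima of $f$; with only the non-strict bound $\le\frac{2\pi}{3}$ the pairs $(x_k,y_k)$ could approach such a minimum and the conclusion would fail.
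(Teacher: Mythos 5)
Your proof is correct and follows essentially the same route as the paper: the same substitution $(\omega_1,\omega_2)=\bigl(\pi a_{n_k+1}(\xi+\lambda)/P_{n_k+1},\,\pi b_{n_k+1}(\xi+\lambda)/P_{n_k+1}\bigr)$ into \eqref{2.2}, the same uniform bound $|\xi+\lambda|/P_{n_k}\le 1+1/P_{n_k}$ from the claim in Proposition \ref{pro 2.4}, the same use of the strict hypothesis $\varlimsup_k b_{n_k+1}/p_{n_k+1}<\tfrac{2}{3}$ to trap both arguments in a region strictly inside $|\cdot|<\tfrac{2\pi}{3}$, and then the minimal-point analysis of $f$. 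The only difference is cosmetic but in your favor: where the paper defines $\epsilon$ as a minimum over the full box $\{|\omega_1|\le l'\pi,|\omega_2|\le l'\pi\}$ (which, taken literally, contains the zeros $(\pm\tfrac{\pi}{3},\mp\tfrac{\pi}{3})$ of $1+8f$ and so would vanish) and relies on Lemma \ref{lem 2.1} to supply the same-sign restriction implicitly, you restrict explicitly to the same-sign compact set $K_c$ before taking the minimum, which is the precise form of the argument.
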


 \begin{proof}
 Let $\omega_{1}=\pi\frac{a_{n_{k}+1}\left(\xi+\lambda\right)}{P_{n_{k}+1}}, \omega_{2}=\pi\frac{b_{n_{k}+1}\left(\xi+\lambda\right)}{P_{n_{k}+1}}$. By \eqref{2.2}, we obtain
 \begin{equation*}
 \left|\hat{\delta}_{P_{n_{k}+1}^{-1}\{0,a_{n_{k}+1},b_{n_{k}+1}\}}\left(\xi+\lambda\right)\right|^{2}
 =\left|\frac{1}{9}\left(1+f(\omega_{1},\omega_{2})\right)\right|,
 \end{equation*}
 where $f(\omega_{1},\omega_{2})=\cos\left(\omega_{1}\right)\cos\left(\omega_{2}\right)\cos\left(\omega_{1}-\omega_{2}\right)$. From the assumption, there exist $\varlimsup\limits_{k\rightarrow\infty}\frac{b_{n_{k}+1}}{p_{n_{k}+1}}<l<\frac{2}{3}$ and an integer $K:=K(l)>0$ such that $\frac{b_{n_{k}+1}}{p_{n_{k}+1}}\leq l$ for all $k\geq K$.
 It follows from Proposition \ref{pro 2.4} that
 \begin{equation*}
 \frac{\xi+\lambda}{p_1p_2\cdots p_{n_k}}\in\left(-1-\frac{1}{p_1p_2\cdots p_{n_k}},1+\frac{1}{p_1p_2\cdots p_{n_k}}\right),
 \end{equation*}
 for all $\left|\xi\right|<1$ and $\lambda\in\Lambda_{n_k}^{\sigma,\alpha}$.

 Since $p_n>1$, we have
 \begin{equation*}
 \left|\frac{b_{n_{k}+1}\left(\xi+\lambda\right)}{P_{n_{k}+1}}\right|\leq\left(1+\frac{1}{2^{n_k}}\right)l,\;\;k>K.
 \end{equation*}
 On the other hand, for any $l<l^{'}<\frac{2}{3}$, there exists $K':=K^{'}(l^{'})>K$ such that for any $k>K^{'}$, $\left(1+\frac{1}{2^{n_k}}\right)l<l^{'}$. Hence,
 \begin{equation*}
 \left|\omega_2\right|\leq l^{'}\pi<\frac{2}{3}\pi, \;\;k>K^{'}.
 \end{equation*}
  Due to $0<a_n<b_n$ for all $n\geq 1$, it follows that $\left|\omega_1\right|<\frac{2\pi}{3}$ for any $k>K^{'}$ as well. Denote $\epsilon=\min\bigg\{\frac{1}{9}\left(1+8f\left(\omega_{1},\omega_{2}\right)\right):\left|\omega_{1}\right|\leq l^{'}\pi,\left|\omega_{2}\right|\leq l^{'}\pi\bigg\}$, by lemma \ref{lem 2.1}, it is easy to see $\epsilon>0$ and we have
 \begin{equation*}
 \left|\hat{\delta}_{P_{n_{k}+1}^{-1}\{0,a_{n_{k}+1},b_{n_{k}+1}\}}\left(\xi+\lambda\right)\right|\geq \epsilon^{\frac{1}{2}}, \text{ for any $k>K^{'}$}.
 \end{equation*}
 \end{proof}

 As for the estimation of $\left|\hat{\delta}_{P_{n_{k}+1}^{-1}\{0,1,\ldots,N_{n_{k}+1}-1\}}\left(\xi+\lambda\right)\right|$, the following result is approached easily.

 \begin{lem}\label{lem 4.6}
 For any increasing subsequence $\{n_{k}\}_{k=1}^{\infty}$, when $\mathcal{D}_{n_{k}+1}=N_{n_{k}+1}$ and $\{p_{n_{k}+1},{D}_{n_{k}+1}\}$ satisfies condition $\textbf{(T1)}$ accordingly, then there exists $\epsilon>0$ such that
 \begin{equation*}
 \left|\hat{\delta}_{P_{n_{k}+1}^{-1}\mathcal{D}_{n_{k}+1}}\left(\xi+\lambda\right)\right|\geq \epsilon,
 \end{equation*}
 for all $\left|\xi\right|<1$ and $\lambda\in\Lambda_{n_k}^{\sigma,\alpha}$.
 \end{lem}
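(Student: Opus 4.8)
The plan is to obtain the bound by combining the two estimates already established: the frequency control of Proposition~\ref{pro 2.4}~$(i)$ and the quadratic lower bound for the consecutive-digit factor in Lemma~\ref{lem2.44}~$(i)$. Since only the single Dirac factor at index $n_k+1$ is involved here, the whole argument amounts to one substitution followed by an elementary numerical verification.

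First I would apply Proposition~\ref{pro 2.4}~$(i)$ with $k=n_k$ and $n=n_k+1$; this is legitimate because $\lambda\in\Lambda_{n_k}^{\sigma,\alpha}$, $\Phi(n_k+1)=N_{n_k+1}$, and $n_k+1\geq k+1$. It yields
\begin{equation*}
\frac{N_{n_k+1}}{P_{n_k+1}}\left|\xi+\lambda\right|\leq\frac{1}{2}h(n_k,n_k+1).
\end{equation*}

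Next I would evaluate $h(n_k,n_k+1)$. The one point that requires attention is that the product $\prod_{i=n_k+1}^{n_k}\frac{1}{\Phi(i)}$ is empty and therefore equals $1$, so that $h(n_k,n_k+1)=\prod_{j=1}^{n_k}\frac{1}{\Phi(j)}+1$. Since $\Phi(j)\geq 2$ for every $j$ (recall $\Phi_{\min}=2$), this is at most $1+2^{-n_k}\leq\frac{3}{2}$, whence
\begin{equation*}
\frac{N_{n_k+1}\pi\left|\xi+\lambda\right|}{P_{n_k+1}}\leq\frac{\pi}{2}\Big(1+\frac{1}{2^{n_k}}\Big)\leq\frac{3\pi}{4}<\sqrt{6}.
\end{equation*}

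Finally I would insert this into Lemma~\ref{lem2.44}~$(i)$, which gives $\left|\hat{\delta}_{P_{n_k+1}^{-1}\mathcal{D}_{n_k+1}}(\xi+\lambda)\right|\geq 1-\frac{1}{6}\big(\frac{N_{n_k+1}\pi(\xi+\lambda)}{P_{n_k+1}}\big)^2$. Because the argument stays strictly below $\sqrt{6}$, the right-hand side is bounded below by the positive constant $1-\frac{3\pi^2}{32}$, uniformly in $k$ and in $\lambda\in\Lambda_{n_k}^{\sigma,\alpha}$; taking $\epsilon=1-\frac{3\pi^2}{32}$ completes the argument. I expect no genuine obstacle: the only subtlety is recognizing the empty product in $h(n_k,n_k+1)$, and since the estimate already holds for every $n_k\geq 1$, the restriction $n_k\in\mathbb{N}_{>7}$ invoked elsewhere is not needed here.
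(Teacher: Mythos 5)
Your proposal is correct and follows essentially the same route as the paper's own proof: apply Proposition~\ref{pro 2.4}~$(i)$ to get $\frac{N_{n_k+1}\pi|\xi+\lambda|}{P_{n_k+1}}\leq\frac{\pi}{2}\bigl(1+2^{-n_k}\bigr)\leq\frac{3\pi}{4}$, then feed this into the bound $1-\frac{1}{6}\bigl(\frac{N_{n_k+1}\pi(\xi+\lambda)}{P_{n_k+1}}\bigr)^{2}$ from Lemma~\ref{lem2.44}~$(i)$ to obtain the uniform constant $\epsilon=1-\frac{3\pi^{2}}{32}>0$. Your added remarks --- the explicit handling of the empty product in $h(n_k,n_k+1)$ and the observation that the restriction $n_k>7$ is not needed for this particular lemma --- are accurate refinements of the paper's terser computation.
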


 \begin{proof}
 A direct application of Proposition \ref{pro 2.4} gives that
 \begin{equation*}
 \left|\frac{N_{n_{k}+1}\pi(\xi+\lambda)}{P_{n_{k}+1}}\right|=\left|\frac{\pi N_{n_k+1}}{p_{n_k+1}}\cdot\frac{\xi+\lambda}{P_{n_k}}\right|\leq \frac{\pi}{2}\left(\frac{1}{2^{n_k}}+1\right)\leq\frac{3\pi}{4}.
 \end{equation*}
 By Lemma \ref{lem2.44} $(i)$, we have
 \begin{equation*}
 \left|\hat{\delta}_{P_{n_{k}+1}^{-1}\mathcal{D}_{n_{k}+1}}\left(\xi+\lambda\right)\right|\geq 1-\frac{1}{6}\left(\frac{N_{n_{k}+1}\pi \left(\xi+\lambda\right)}{P_{n_{k}+1}}\right)^{2}\geq 1-\frac{1}{6}\left(\frac{3\pi}{4}\right)^{2}:=\epsilon>0.
 \end{equation*}
 The proof is complete.
 \end{proof}

 With full preparations above, we are ready to prove Theorem \ref{thm 1.4}. This issue will be discussed in two cases:
 \begin{enumerate}
 	\item[$\mathrm{(i)}$] $\#\{n:\Phi(n)\geq 3,n\geq 1\}=+\infty$.
 	\item[$\mathrm{(ii)}$] $\#\{n:\Phi(n)\geq 3,n\geq 1\}<+\infty$.
 \end{enumerate}

  When it comes to case $\mathrm{(i)}$, according to Lemma \ref{lem 4.3}, it is crucial to find an increasing subsequence $\{n_{k}\}_{k=1}^{\infty}$ satisfying the conditions of Lemma \ref{lem 4.3}. In fact, combining with Proposition \ref{lem 4.4}, Lemma \ref{lem 5.3} and Lemma \ref{lem 4.6}, it is easy to arrive at the desired conclusion. As for the case $\mathrm{(ii)}$,
 we will construct a new set $\tilde{\Lambda}$ and prove it is a spectrum of $\mu_{\mathcal{P},\mathcal{D}}$ by Lemma \ref{lem 3.2}.

 \textbf{Case 1:} $\#\{n:\Phi(n)\geq 3,n\geq 1\}=+\infty$.

 As an immediate application of Proposition \ref{lem 4.4} and Lemmas \ref{lem 5.3}-\ref{lem 4.6}, we can construct a subsequence $\{n_{k}\}_{k=1}^{\infty}$ such that $\left|\hat{\mu}_{>n_{k}}(\xi+\lambda)\right|$ has a non-zero bound.

 \begin{prop}\label{pro 5.1}
 	Suppose $\mu_{\mathcal{P},\mathcal{D}}$ satisfy the same assumptions with Theorem \ref{thm 1.4}. Suppose $\#\{n:\Phi(n)\geq 3,n\geq 1\}=+\infty$, then $\mu_{\mathcal{P},\mathcal{D}}$ is a spectral measure with spectrum $\Lambda^{\sigma,\alpha}$ for any $\sigma\in\{-1,1\}^{\mathbb{N}}$.
 \end{prop}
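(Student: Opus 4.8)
The plan is to invoke the machinery assembled in the preceding lemmas and reduce the claim to finding a suitable increasing subsequence $\{n_k\}$ to which Lemma \ref{lem 4.3} applies. Recall that Lemma \ref{lem 3.1} already guarantees that $\Lambda^{\sigma,\alpha}$ is an orthogonal set of $\mu_{\mathcal{P},\mathcal{D}}$, so the only thing left to verify is the completeness criterion of Lemma \ref{lem 3.2}(ii), which by Lemma \ref{lem 4.3} follows once we exhibit $\{n_k\}$, $\varepsilon>0$ and $\delta>0$ with $\left|\hat{\mu}_{>n_k}(\xi+\lambda)\right|>\varepsilon$ for all $\left|\xi\right|<\delta$ and $\lambda\in\Lambda^{\sigma,\alpha}_{n_k}$. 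I would take $\delta=1$ throughout, matching the hypotheses of Proposition \ref{lem 4.4} and Lemmas \ref{lem 5.3}--\ref{lem 4.6}.

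First I would use the hypothesis $\#\{n:\Phi(n)\geq 3\}=+\infty$ to choose the subsequence. Since there are infinitely many indices $n$ with $\Phi(n)\geq 3$, I can select an increasing sequence $\{n_k\}_{k=1}^{\infty}\subset\mathbb{N}_{>7}$ such that $\Phi(n_k+1)\geq 3$ for every $k$; this is exactly the standing assumption required by Proposition \ref{lem 4.4}. Moreover, among these indices either infinitely many satisfy $\Phi(n_k+1)=3$ or infinitely many satisfy $\Phi(n_k+1)=N_{n_k+1}$, so after passing to a further subsequence I may assume $\{n_k\}$ is of one pure type. In the first case I must also arrange $\varlimsup_{k\to\infty}\frac{b_{n_k+1}}{p_{n_k+1}}<\frac{2}{3}$, which is automatic from the global bound $\sup_{n\geq1}\frac{b_n}{p_n}<\frac{2}{3}$ in \textbf{(T2)}, so the hypothesis of Lemma \ref{lem 5.3} holds.

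Next I would factor $\left|\hat{\mu}_{>n_k}(\xi+\lambda)\right|=\left|\hat{\delta}_{P^{-1}_{n_k+1}\mathcal{D}_{n_k+1}}(\xi+\lambda)\right|\cdot\prod_{j=2}^{\infty}\left|m_{\mathcal{D}_{n_k}+j}\!\left(\tfrac{\xi+\lambda}{P_{n_k+j}}\right)\right|$ and bound each factor below by a positive constant uniform in $k$. The tail product is handled directly by Proposition \ref{lem 4.4}, giving a lower bound $C_0\in(0,1)$ independent of $n_k$. The single leading factor $\left|\hat{\delta}_{P^{-1}_{n_k+1}\mathcal{D}_{n_k+1}}(\xi+\lambda)\right|$ is bounded below by Lemma \ref{lem 5.3} in the $\Phi(n_k+1)=3$ case (yielding $\epsilon^{1/2}>0$ for $k$ large) and by Lemma \ref{lem 4.6} in the $\Phi(n_k+1)=N_{n_k+1}$ case (yielding $1-\tfrac16(3\pi/4)^2>0$). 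Taking $\varepsilon$ to be $C_0$ times the smaller of these two positive constants, discarding the finitely many initial $k$ permitted by Lemma \ref{lem 5.3}, completes the verification of the hypotheses of Lemma \ref{lem 4.3}.

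The main obstacle I anticipate is not any single estimate but the bookkeeping needed to ensure the leading-factor bound is genuinely uniform across $k$: Lemma \ref{lem 5.3} only delivers its bound for $k>K'$, so I must be careful to pass to the tail of the subsequence and absorb the finite exceptional set, and I must make sure the $\Phi(n_k+1)=2$ possibility is genuinely excluded by the choice $\Phi(n_k+1)\geq 3$ (otherwise the $\{0,d\}$ factor can vanish and no uniform bound exists). Once the subsequence is fixed to be of a single pure type with the leading digit-set cardinality at least $3$, the conclusion then follows immediately from Lemma \ref{lem 4.3}, identifying $\Lambda^{\sigma,\alpha}$ as a spectrum of $\mu_{\mathcal{P},\mathcal{D}}$ for every $\sigma\in\{-1,1\}^{\mathbb{N}}$.
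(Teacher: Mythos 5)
Your proposal is correct and follows essentially the same route as the paper's proof: choose an increasing sequence $\{n_k\}\subset\mathbb{N}_{>7}$ with $\Phi(n_k+1)\geq 3$ (which exists by the hypothesis), factor $\hat{\mu}_{>n_k}$ into the leading term $\hat{\delta}_{P^{-1}_{n_k+1}\mathcal{D}_{n_k+1}}$ times the tail, bound the tail below by Proposition \ref{lem 4.4} and the leading term by Lemma \ref{lem 5.3} or Lemma \ref{lem 4.6}, and conclude via Lemma \ref{lem 4.3}. Your additional bookkeeping (passing to a pure-type subsequence and discarding the finitely many indices excluded by Lemma \ref{lem 5.3}) is simply a more explicit handling of details the paper leaves implicit.
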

 \begin{proof}
 We choose a subsequence $\{n_{k}\}_{k=1}^{\infty}$ satisfying
 	\begin{enumerate}
 		\item $n_{k}\geq 7$,
 		\item $\Phi(n_{k}+1)\geq 3$ for all $k\in\mathbb{N}$.
 	\end{enumerate}
 		Due to the assumption $\#\{n:\Phi(n)\geq 3,n\geq 1\}=+\infty$, the subsequence $\{n_{k}\}_{k=1}^{\infty}$ indeed exists. Obviously, it satisfies the conditions of Proposition \ref{lem 4.4}, so there exists $\epsilon>0$ such that
 			\begin{equation*}
 			\left|\hat{\mu}_{>n_{k}+1}(\xi+\lambda)\right|\geq \epsilon,
 			\end{equation*}
 	for all $\lambda\in\Lambda^{\sigma,\alpha}_{n_{k}}$ and $\left|\xi\right|<1$.
 	On the other hand, it follows from the assumptions and Lemmas \ref{lem 5.3}-\ref{lem 4.6} that there exists $\epsilon^{'}$ such that
 	\begin{equation*}
 	\left|\hat{\delta}_{P_{n_{k}+1}^{-1}\mathcal{D}_{n_{k}+1}}(\xi+\lambda)\right|>\epsilon^{'},
 	\end{equation*}	
 	for all $\lambda\in\Lambda^{\sigma,\alpha}_{n_{k}}$ and $\left|\xi\right|<1$.
 Hence, we obtain
    \begin{equation*}
 	\left|\hat{\mu}_{>n_{k}}(\xi+\lambda)\right|=\left|\hat{\mu}_{>n_{k}+1}(\xi+\lambda)\right|\cdot\left|\hat{\delta}_{P_{n_{k}+1}^{-1}\mathcal{D}_{n_{k}+1}}(\xi+\lambda)\right|\geq \epsilon\epsilon^{'}.
 	\end{equation*}
 Finally, it follows from Lemma \ref{lem 4.3} that the spectrality conclusion holds. The proof is complete.
 \end{proof}

\textbf{Case 2:} ${\#\{n:\Phi(n)\geq 3,n\geq 1\}<+\infty}$.

In this case, we know that there only exist finitely many $\mathcal{D}_{n}$ with $\#\mathcal{D}_{n}\geq 3$ in $\mu_{\mathcal{P},\mathcal{D}}$. So we will divide the measure $\mu_{\mathcal{P},\mathcal{D}}$ into two parts to realize our proof.

 \begin{prop}\label{pro 5.3}
 	Let $\mu_{\mathcal{P},\mathcal{D}}$ satisfy the same assuptioms with Theorem \ref{thm 1.4}. Suppose $\#\{n:\Phi(n)\geq 3,n\geq 1\}<+\infty$, then $\mu_{\mathcal{P},\mathcal{D}}$ is a spectral measure.
 \end{prop}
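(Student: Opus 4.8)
The plan is to reduce Case 2 to Case 1 by a splitting argument. Since $\#\{n:\Phi(n)\geq 3\}<\infty$, there exists an integer $M$ such that $\Phi(n)=2$ for all $n>M$; that is, beyond the index $M$ every digit set is of the form $\{0,d_n\}$ satisfying $\textbf{(T3)}$. I would factor the measure as $\mu_{\mathcal{P},\mathcal{D}}=\mu_M*\mu_{>M}$, where $\mu_M$ involves the finitely many ``large'' digit sets together with possibly some two-element ones, and $\mu_{>M}$ is a pure infinite convolution of two-element measures $\delta_{P_n^{-1}\{0,d_n\}}$ with $2\mid\frac{p_n}{\gcd(d_n,p_n)}$ for all $n>M$.

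The key idea is that each factor is separately well understood. By Lemma \ref{lem 3.1}, $\Lambda_M^{\sigma,\alpha}$ is a spectrum of $\mu_M$. For the tail $\mu_{>M}$, which is a Moran measure built solely from two-element digit sets under condition $\textbf{(T3)}$, one already knows from the cited work of He, He et al. (the case $\mathcal{D}_n=\{0,d_n\}$ with $2\mid\frac{p_n}{\gcd(d_n,p_n)}$) that it is a spectral measure; let $\tilde{\Lambda}_{>M}$ denote a spectrum of $\mu_{>M}$, which can be taken of the form $\sum_{n>M}P_n P_M^{-1}\{0,\frac{\sigma_n}{2^{1+l_n}}\}$ after rescaling. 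I would then set
\begin{equation*}
\tilde{\Lambda}=\Lambda_M^{\sigma,\alpha}+P_M\,\tilde{\Lambda}_{>M},
\end{equation*}
the natural candidate spectrum obtained by concatenating a spectrum of the head with a rescaled spectrum of the tail.

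The main steps are then as follows. First I would verify that $\tilde{\Lambda}$ is an orthogonal set for $\mu_{\mathcal{P},\mathcal{D}}$: for $\lambda\neq\lambda'$ in $\tilde{\Lambda}$, writing each as a head-part plus a tail-part, either the head-parts differ (whence $\hat{\mu}_M$ vanishes on the difference by Lemma \ref{lem 3.1}) or they coincide and the tail-parts differ (whence $\hat{\mu}_{>M}$ vanishes, using that $P_M\tilde{\Lambda}_{>M}$ scales a spectrum of $\mu_{>M}$ and that the cross terms land in $\mathbb{Z}$ because $\Phi(n)\mid p_n$). Since $\hat{\mu}_{\mathcal{P},\mathcal{D}}=\hat{\mu}_M\cdot\hat{\mu}_{>M}$, the product vanishes in either case. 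Second, to upgrade orthogonality to completeness I would apply Lemma \ref{lem 3.2}(ii), showing $Q_{\tilde{\Lambda}}(\xi)\equiv 1$; equivalently, I would invoke the mutual-orthogonality-plus-completeness structure, using that $\Lambda_M^{\sigma,\alpha}$ exhausts $L^2(\mu_M)$ and $\tilde{\Lambda}_{>M}$ exhausts $L^2(\mu_{>M})$, so that the product set is a spectrum of the convolution $\mu_M*\mu_{>M}$.

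The main obstacle is the completeness of the tail factor $\mu_{>M}$, i.e. confirming that the pure two-element Moran measure under $\textbf{(T3)}$ is genuinely spectral with an explicitly controllable spectrum; this is where I would lean on the cited result for the case $\mathcal{D}_n=\{0,d_n\}$, and verify that its spectrum is compatible with the scaling by $P_M$ and with the Hadamard-triple data $\{(p_n,\mathcal{D}_n,L_n)\}$ from Lemma \ref{pro3.2}. A secondary subtlety is ensuring that the concatenation $\tilde{\Lambda}=\Lambda_M^{\sigma,\alpha}+P_M\tilde{\Lambda}_{>M}$ does not create coincidences that destroy orthogonality; this follows from the integrality condition $\Phi(n)\mid p_n$, which guarantees the two pieces live at well-separated ``scales'' $P_M$, so the direct-sum decomposition of the spectrum mirrors the convolution decomposition of the measure.
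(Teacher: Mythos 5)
Your proposal follows essentially the same route as the paper: decompose $\mu_{\mathcal{P},\mathcal{D}}=\mu_{N}\ast\nu(p_1\cdots p_N\,\cdot)$ at the last index with $\Phi(n)\geq 3$, take $\Lambda_{N}$ (from Lemma \ref{lem 3.1}) as a spectrum of the head and the integer spectrum $\Lambda^{'}$ of the two-element tail from \cite{HH}, and then show $Q_{\tilde{\Lambda}}\equiv 1$ for $\tilde{\Lambda}=\Lambda_{N}+p_1\cdots p_N\Lambda^{'}$ via Lemma \ref{lem 3.2}. The one point to tighten is that the Fubini-type factorization of $Q_{\tilde{\Lambda}}$ relies on $\Lambda^{'}\subseteq\mathbb{Z}$ (so that $\hat{\mu}_{N}$, being $p_1\cdots p_N$-periodic, is unchanged by the shifts $p_1\cdots p_N\lambda^{'}$), which is exactly what your ``well-separated scales'' remark must mean; your guessed explicit form $\sum_{n>M}P_nP_M^{-1}\{0,\frac{\sigma_n}{2^{1+l_n}}\}$ for the tail spectrum need not consist of integers, so one should simply invoke the integer spectrum guaranteed by \cite{HH}, as the paper does.
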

 \begin{proof}
 Since $\#\{n:\Phi(n)\geq 3,n\geq 1\}<+\infty$, we conclude that there only exist finitely many $\mathcal{D}_{n}$ with $\#\mathcal{D}_{n}\geq 3$ in $\mu_{\mathcal{P},\mathcal{D}}$. So there exists $N>0$ such that $\Phi(n)=2$, i.e. $\mathcal{D}_{n}=\{0,d_{n}\}$ for all $n>N$. In this way, $\mu_{\mathcal{P},\mathcal{D}}$ can be rewritten as
 	\begin{equation*}
 	\mu_{\mathcal{P},\mathcal{D}}(\cdot)=\mu_{N}(\cdot)\ast\nu(p_1p_2\cdots p_N\cdot),
 	\end{equation*}
 	where
 	\begin{equation*}
 	\mu_{N}=\delta_{P^{-1}_{1}\mathcal{D}_1}(\cdot)\ast\delta_{P^{-1}_{2}\mathcal{D}_2}(\cdot)\ast\ldots\delta_{P^{-1}_{N}\mathcal{D}_N}(\cdot),
 	\end{equation*}
 	 and  $$\nu(\cdot)=\delta_{p^{-1}_{N+1}\{0,d_{N+1}\}}(\cdot)\ast\delta_{(p_{N+1}p_{N+2})^{-1}\{0,d_{N+2}\}}(\cdot)\ast\ldots.$$
 Let
 \begin{equation*}
 \Lambda_{N}=\sum_{\substack{\Phi(i)=2\\i\leq N}}P_{i}\,\bigg\{0,\frac{\sigma_{i}}{2^{1+l_{i}}}\bigg\}+\sum_{\substack{\Phi(j)=3\\j\leq N}}P_{j}\,\bigg\{0,\frac{1}{3},-\frac{1}{3}\bigg\}+\sum_{\substack{\Phi(m)=N_{m}\\m\leq N}}P_{m}\,\frac{\alpha_{m}}{N_{m}},
 \end{equation*}
 where $\sigma_{i}\in\{-1,1\}$ and $l_{i},\alpha_{m}$ is defined by \eqref{1.8}. According to Lemma \ref{lem 3.1}, we know that $\Lambda_{N}$ is a spectrum of $\mu_{N}$.
 On the other hand, it follows from Theorem $1.1$ and Theorem $1.3$ of \cite{HH} that $\nu$ is a spectral measure with integer spectrum $\Lambda^{'}$(associated with $\sigma$). Define a new $\tilde{\Lambda}$ by
 \begin{equation*}
 \tilde{\Lambda}=\Lambda_{N}+p_1p_2p_3\ldots p_N\Lambda^{'}.
 \end{equation*}
 Now we are in a position to prove that $\tilde{\Lambda}$ is a spectrum of $\mu_{\mathcal{P},\mathcal{D}}$. By Lemma \ref{lem 3.2}, it is equivalent to show that for all $\xi\in\mathbb{R}$, $Q(\xi)=\sum_{\substack{\tilde{\lambda}\in\tilde{\Lambda}}}\left|\hat{\mu}_{\mathcal{P},\mathcal{D}}(\tilde{\lambda}+\xi)\right|^{2}=1$. In fact,
 \begin{equation*}
 \begin{aligned}
 Q\left(\xi\right)=&\sum_{\substack{\tilde{\lambda}\in\tilde{\Lambda}}}\left|\hat{\mu}_{\mathcal{P},\mathcal{D}}\left(\tilde{\lambda}+\xi\right)\right|^{2}\\
 =&\sum_{\substack{\lambda^{'}\in\Lambda^{'}}}\sum_{\substack{\lambda\in\Lambda_{N}}}\left|\hat{\mu}_{N}\left(\xi+\lambda\right)\right|^{2}\left|\hat{\nu}\left(\frac{\xi+\lambda}{p_1p_2\ldots p_N}+\lambda^{'}\right)\right|^{2}\\
 =&\sum_{\substack{\lambda\in\Lambda_{N}}}\left|\hat{\mu}_{N}\left(\xi+\lambda\right)\right|^{2}\sum_{\substack{\lambda^{'}\in\Lambda^{'}}}\left|\hat{\nu}\left(\frac{\xi+\lambda}{p_1p_2\ldots p_N}+\lambda^{'}\right)\right|^{2}\\
 =&1.
 \end{aligned}
 \end{equation*}
 From the above discussion, the conclusion can be reached that $\mu_{\mathcal{P},\mathcal{D}}$ is a spectral measure with spectrum $\tilde{\Lambda}$. The proof is complete.
 \end{proof}

 \begin{proof}[Proof of Theorem \ref{thm 1.4}]
 The desired result comes directly from Proposition \ref{pro 5.1} and Proposition \ref{pro 5.3}.
 \end{proof}

 \section{\bf Examples and applications}

 In this section, we give some examples and applications related to our main result. The following example indicates that the conditions in $\textbf{(T1)-(T3)}$ are essential in some sense.

 \begin{exam}\label{ex 5.3}
 Let $\mathcal{D}_1=\{0,1,2\}$, $\mathcal{D}_2=\{0,5,6\}$, $\mathcal{D}_n=\{0,3\}$ for $n\geq 3$ and $p_n=2$ for $n\geq 1$. Then $\mu_{\{p_n\},\{\mathcal{D}_n\}}$ is not a spectral measure.
 \end{exam}

 \begin{proof}
 It is easy to see that $\{\mathcal{P},\mathcal{D}\}$ do not meet most of the conditions in $\textbf{(T1)-(T3)}$. A direct calculation shows that
 \begin{equation*}
 \begin{aligned}
 \mu_{\{p_n\},\{\mathcal{D}_n\}}&=\delta_{2^{-1}\{0,1,2\}}\ast\delta_{2^{-2}\{0,5,6\}}\ast\delta_{2^{-3}\{0,3\}}\ast\delta_{2^{-4}\{0,3\}}\ast\ldots\\
 &=\delta_{\{0,\frac{1}{2},1,\frac{5}{4},\frac{3}{2},\frac{7}{4},2,\frac{9}{4},\frac{5}{2}\}}\ast\frac{4}{3}\mathcal{L}_{[0,\frac{3}{4}]}\\
 &=\frac{4}{27}\mathcal{L}_{[0,\frac{1}{2}]\cup[\frac{3}{4},1]\cup[3,\frac{13}{4}]}+\frac{8}{27}\mathcal{L}_{[\frac{1}{2},\frac{3}{4}]\cup[1,\frac{3}{2}]\cup[\frac{11}{4},3]}+\frac{4}{9}\mathcal{L}_{[\frac{3}{2},\frac{11}{4}]},
 \end{aligned}
 \end{equation*}
 where $\mathcal{L}$ denotes the Lebesgue measure in $\mathbb{R}$. It has been showed that an absolutely continuous spectral measure must be uniform on its support \cite{DL1}. It is clear that $\mu_{\{p_n\},\{\mathcal{D}_n\}}$ is not uniformly distributed on its support $[0,\frac{13}{4}]$. Hence, $\mu_{\{p_n\},\{\mathcal{D}_n\}}$ is not a spectral measure.
 \end{proof}

The following examples imply that some conditions in $\textbf{(T1)-(T3)}$ maybe not necessary. To be specific, Example \ref{ex 5.4} shows that the hypothesis $2\mid\frac{p_n}{\gcd\left(d_n,p_n\right)}$ in $\textbf{T(3)}$ may potentially be refined.
 \begin{exam}\label{ex 5.4}
 Let $(\mathcal{D}_1,p_1)=(\{0,1\},4),(\mathcal{D}_2,p_2)=(\{0,2\},9)$ and $(\mathcal{D}_n,p_n)=(\{0,1\},4)$ for $n\geq 3$. Then $\mu_{\{p_n\},\{\mathcal{D}_n\}}$ is a spectral one.
 \end{exam}

 \begin{proof}
  Note that $2\nmid \frac{9}{\gcd(2,9)}$ in $(\mathcal{D}_2,p_2)$ do not satisfy $2\mid \frac{p_2}{\gcd(d_2,p_2)}$ in $\textbf{T(3)}$. The specific proof is similar to \cite[Example 5.3]{HH}, so we omit it.
 \end{proof}

 Furthermore, Example \ref{ex 5.5} not only gives a concrete characterization to our main result but also says that the assumption $\sup_{n\geq 1}\Big\{\frac{b_n}{p_n}\Big\}<\frac{2}{3}$ may be modified.

 \begin{exam}\label{ex 5.5}
 Let $(\mathcal{D}_1,p_1)=(\{0,2\},4),(\mathcal{D}_2,p_2)=(\{0,1,2\},3k),k\in\mathbb{N}$ and $(\mathcal{D}_n,p_n)=(\{0,1,2,\ldots,n\},2(n+1))$ for $n\geq 3$. Then $\mu_{\{p_n\},\{\mathcal{D}_n\}}$ is a spectral one.
 \end{exam}

 \begin{proof}

 Define that
 \begin{equation*}
 \nu=\delta_{12^{-1}\{0,1,2\}}\ast\delta_{(12k)^{-1}(2\cdot 4)^{-1}\{0,1,2,3\}}\ast\delta_{(12k)^{-1}(2\cdot4)^{-1}(2\cdot5)^{-1}\{0,1,2,3,4\}}\ast\ldots,
 \end{equation*}
 then $\mu_{\{p_n\},\{\mathcal{D}_{n}\}}=\delta_{4^{-1}\{0,2\}}\ast\nu$.
 Obviously, it follows from Lemma \ref{pro3.2} that there exists $L=\{0,1\}$ such that $L$ is a spectrum of $\delta_{4^{-1}\{0,2\}}$, and \cite[Theorem 1.4]{AH} implies that $\nu$ is a spectral measure. A simple calculation gives that $\mathcal{Z}\left(\hat{\nu}\right)\subseteq\mathbb{Z}$. Hence $\mu_{\{p_n\},\{\mathcal{D}_n\}}$ is a spectral one by \cite[Theorem 1.5]{HLL}.
 \end{proof}

  Up to now, the spectrality of measure $\mu_{\{p_n\},\{\mathcal{D}_{n}\}}$ satisfying $\textbf{(T1)-(T3)}$ has been proved in Theorem \ref{thm 1.4}. In fact, more is true: inspired by \cite{LLZ}, some kinds of $\mu_{\{p_n\},\{\mathcal{D}_{n}\}}$ are absolutely continuous with respect to Lebesgue measure and their supports are tiles of $\mathbb{R}$, as will be explained next. Though a more general case has been demonstrated in \cite{LLZ}, here we will give a proof for completeness.

  \begin{thm}\label{thm 1.5}
  Let $\mu_{\mathcal{P},\mathcal{D}}$ be defined by \eqref{1.1} with $\Phi(n)=p_n$ for all $n\geq 1$. Suppose the pair $\{\mathcal{P},\mathcal{D}\}$ satisfies $\textbf{(T2)}$ and $\textbf{(T3)}$, then the supports of $\mu_{\mathcal{P},\mathcal{D}}$ tile $\mathbb{R}$ with tiling set $\mathbb{Z}$.
  \end{thm}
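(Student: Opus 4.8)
The plan is to show that $\mu_{\mathcal{P},\mathcal{D}}$ is exactly the normalized Lebesgue measure on a compact set $T$ with $|T|=1$ that tiles $\mathbb{R}$ by $\mathbb{Z}$; the statement about the support is then immediate. First I would isolate the only structural fact about the digits that is needed. Since $\Phi(n)=\#\mathcal{D}_n=p_n$, conditions $\textbf{(T2)}$ and $\textbf{(T3)}$ force each $\mathcal{D}_n$ to be a complete residue system modulo $p_n$: for a level of type $\textbf{(T3)}$ one has $p_n=2$ and $d_n$ odd, so $\{0,d_n\}\equiv\{0,1\}\pmod 2$, while for a level of type $\textbf{(T2)}$ one has $p_n=3$ and $\{a_n,b_n\}\equiv\{\pm1\}\pmod 3$, so $\{0,a_n,b_n\}\equiv\{0,1,2\}\pmod 3$. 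Moreover $0\in\mathcal{D}_n$, and the size restrictions in $\textbf{(T2)}$--$\textbf{(T3)}$ give $\max\mathcal{D}_n\le p_n-1$, a bound I will use twice below.

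Next I would set up the finite approximations $T_n=\mathrm{supp}(\mu_n)=\big\{\sum_{k=1}^n P_k^{-1}d_k:d_k\in\mathcal{D}_k\big\}\subseteq P_n^{-1}\mathbb{Z}$. The crucial point is that $P_nT_n=\big\{\sum_{k=1}^n (p_{k+1}\cdots p_n)\,d_k\big\}$ is a complete residue system modulo $P_n$. I would prove this by a mixed-radix injectivity argument: reducing a coincidence $\sum_k (p_{k+1}\cdots p_n)d_k\equiv\sum_k (p_{k+1}\cdots p_n)d_k'\pmod{P_n}$ modulo $p_n$ kills every term but the last, so $\mathcal{D}_n$ being a complete residue system forces $d_n=d_n'$; dividing by $p_n$ and inducting identifies all digits. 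As $\#\big(\mathcal{D}_1\times\cdots\times\mathcal{D}_n\big)=P_n$, this is a bijection onto $\mathbb{Z}/P_n\mathbb{Z}$, so $T_n\oplus\mathbb{Z}=P_n^{-1}\mathbb{Z}$ and hence $U_n:=T_n\oplus[0,P_n^{-1})$ satisfies $U_n\oplus\mathbb{Z}=\mathbb{R}$ with $|U_n|=1$ (the intervals $t+[0,P_n^{-1})$, $t\in T_n$, are disjoint because the points of $T_n$ are distinct in $P_n^{-1}\mathbb{Z}$). Finally, the telescoping estimate $\mathrm{supp}(\mu_{>n})\subseteq\big[0,\sum_{k>n}(p_k-1)/P_k\big]=[0,P_n^{-1}]$ together with $\mu_{\mathcal{P},\mathcal{D}}=\mu_n\ast\mu_{>n}$ shows $T:=\mathrm{supp}(\mu_{\mathcal{P},\mathcal{D}})=T_n+\mathrm{supp}(\mu_{>n})\subseteq\overline{U_n}$.

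Then I would pass to the limit. Let $u_n$ be the uniform probability measure on $[0,P_n^{-1})$; smearing each of the $P_n$ atoms of $\mu_n$ (each of mass $P_n^{-1}$) over a disjoint interval of length $P_n^{-1}$ gives the identity $\mu_n\ast u_n=\mathcal{L}|_{U_n}$. Since $\mu_n\to\mu_{\mathcal{P},\mathcal{D}}$ and $u_n\to\delta_0$ weakly, we get $\mathcal{L}|_{U_n}\to\mu_{\mathcal{P},\mathcal{D}}$ weakly. The decisive observation is that the tiles are nested: from $T_{n+1}=T_n+P_{n+1}^{-1}\mathcal{D}_{n+1}$ and $\mathcal{D}_{n+1}+[0,1)\subseteq[0,p_{n+1})$ (using $\max\mathcal{D}_{n+1}\le p_{n+1}-1$) one reads off $U_{n+1}\subseteq U_n$. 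Thus $\{U_n\}$ is a decreasing sequence of sets, each of measure $1$ and each tiling $\mathbb{R}$ by $\mathbb{Z}$, so its intersection $U_\infty=\bigcap_n U_n$ has $|U_\infty|=1$; the periodizations $\sum_{t\in\mathbb{Z}}1_{U_n}(\cdot-t)\equiv1$ decrease to $\sum_{t\in\mathbb{Z}}1_{U_\infty}(\cdot-t)$, which therefore equals $1$ almost everywhere, i.e. $U_\infty$ tiles $\mathbb{R}$ by $\mathbb{Z}$. Moreover $1_{U_n}\to1_{U_\infty}$ in $L^1$, so $\mathcal{L}|_{U_n}\to\mathcal{L}|_{U_\infty}$ in total variation; by uniqueness of the weak limit $\mu_{\mathcal{P},\mathcal{D}}=\mathcal{L}|_{U_\infty}$. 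In particular $\mathrm{supp}(\mu_{\mathcal{P},\mathcal{D}})=\overline{U_\infty}$ tiles $\mathbb{R}$ by $\mathbb{Z}$, as claimed.

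I expect the limiting step to be the main obstacle. The finite-level tilings and the support containment $T\subseteq\overline{U_n}$ are routine, but upgrading weak convergence of $\mathcal{L}|_{U_n}$ into the exact identity $\mu_{\mathcal{P},\mathcal{D}}=\mathcal{L}|_{U_\infty}$---that is, genuine absolute continuity with an indicator density and an honest set-theoretic tiling, rather than the weaker statement that the $\mathbb{Z}$-periodization of $\mu_{\mathcal{P},\mathcal{D}}$ is Lebesgue---is exactly what forces me to exploit the monotonicity $U_{n+1}\subseteq U_n$ and the resulting $L^1$-convergence of indicators. It is worth double-checking that the bound $\max\mathcal{D}_n\le p_n-1$ furnished by $\textbf{(T2)}$--$\textbf{(T3)}$ is used in two essential places: the telescoping tail estimate and the nesting of the tiles.
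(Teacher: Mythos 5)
Your proof is correct, and it takes a genuinely different, more elementary route than the paper's. The paper deduces the theorem from spectral machinery: Theorem \ref{thm 1.4} gives spectrality with a spectrum $\Lambda\subset\mathbb{Z}$; Theorems 5.3 and 5.7 of \cite{LLZ} give absolute continuity from $\Phi(n)=p_n$; the zero-set formula \eqref{eq1.7} yields $\mathcal{Z}\left(\hat{\mu}_{\mathcal{P},\mathcal{D}}\right)=\mathbb{Z}\setminus\{0\}$ and hence $\Lambda=\mathbb{Z}$; the uniformity result of \cite{DL1} together with Theorem 2.2 of \cite{Lw2} then gives $\mu_{\mathcal{P},\mathcal{D}}=\chi_{T}\,dx$; and Theorem 2.1 of \cite{Lw2} converts this into $T\oplus\mathbb{Z}=\mathbb{R}$. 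You bypass all of this: from $\Phi(n)=p_n$ and \textbf{(T2)}--\textbf{(T3)} you extract that each $\mathcal{D}_n$ is a complete residue system modulo $p_n$ with $\max\mathcal{D}_n\leq p_n-1$; the mixed-radix argument gives $T_n\oplus\mathbb{Z}=P_n^{-1}\mathbb{Z}$, so each $U_n=T_n+[0,P_n^{-1})$ tiles $\mathbb{R}$ by $\mathbb{Z}$; and nesting plus weak convergence identifies $\mu_{\mathcal{P},\mathcal{D}}$ as Lebesgue measure on $\bigcap_n U_n$. Your route is self-contained (no appeal to \cite{LLZ}, \cite{DL1}, \cite{Lw2}, or even to Theorem \ref{thm 1.4}) and proves more: an explicit identification of the measure together with an exact, not merely almost-everywhere, tiling at every finite level. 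The paper's route, in exchange, produces the spectral byproduct that $\mathbb{Z}$ is simultaneously a spectrum and a tiling set, and its ingredients remain usable in settings where the digits need not be small. One further observation would shorten your own argument: a set of nonnegative integers containing $0$ that is a complete residue system modulo $p_n$ and is contained in $\{0,1,\dots,p_n-1\}$ must equal $\{0,1,\dots,p_n-1\}$; so under the hypotheses the digit sets are forced to be $\{0,1\}$ or $\{0,1,2\}$, every $U_n$ equals $[0,1)$, and the limiting step you flagged as the main obstacle collapses entirely, since $\mu_{\mathcal{P},\mathcal{D}}$ is simply Lebesgue measure on $[0,1]$. (Incidentally, read strictly, the bound $\sup_{n\geq1}b_n/p_n<\frac{2}{3}$ in \textbf{(T2)} makes three-element levels with $p_n=3$ impossible, a defect of the stated hypotheses that affects the paper's proof and yours equally.)
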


 \begin{proof}
 From Theorem \ref{thm 1.4}, we know $\mu_{\mathcal{P},\mathcal{D}}$ is a spectral one with spectrum $\Lambda\subset\mathbb{Z}$. When the pair $\{\mathcal{P},\mathcal{D}\}$ satisfies $\textbf{(T2)}$ and $\textbf{(T3)}$, Lemma \ref{pro3.2} tells us that there exist $\{L_n\}_{n=1}^{\infty}\subseteq\mathbb{Z}$ such that $\{(p_n,\mathcal{D}_n,L_n)\}$ is a sequence of Hadamard triples. The assumption $\Phi(n)=p_n$ for all $n\geq 1$ ensures that $\mu_{\mathcal{P},\mathcal{D}}$ is an absolutely continuous measure   by Theorem $5.3$ and Theorem $5.7$ of \cite{LLZ}. From \eqref{eq1.7}, a simple calculation gives
 \begin{equation*}
 \mathcal{Z}\left(\hat{\mu}_{\mathcal{P},\mathcal{D}}\right)=\left(\underset{i\geq1,\Phi(i)=2}\cup 2^{i}\frac{2\mathbb{Z}+1}{2}\right)\bigcup\left(\underset{j\geq1, \Phi(j)=3}\cup 3^{j}\frac{\mathbb{Z}\setminus3\mathbb{Z}}{3}\right)=\mathbb{Z}\setminus\{0\},
 \end{equation*}
 hence we obtain that $\Lambda=\mathbb{Z}$ and it follows from \cite{DL1} and Theorem 2.2 of \cite{Lw2} that  $\mu_{\mathcal{P},\mathcal{D}}=\chi_{T}dx$, where $\chi_{T}$ is a characteristic function on $T$ (support set of $\mu_{\mathcal{P},\mathcal{D}}$). Finally, by Theorem 2.1 of \cite{Lw2}, we can conclude that $T$ is a fundamental domain of lattice $\mathbb{Z}$, i.e., $T\oplus\mathbb{Z}=\mathbb{R}$ (a fundamental domain of a lattice $L$ is a set $\mathcal{D}$ such that $\underset{l\in L}\cup (\mathcal{D}+l)$ tiles $\mathbb{R}^{n}$ almost everywhere).
 \end{proof}

 Finally, we give an example related to Theorem \ref{thm 1.5}.
 \begin{exam}
 Let $(\mathcal{D}_{2k-1},p_{2k-1})=(\{0,1\},2)$ and $(\mathcal{D}_{2k},p_{2k})=(\{0,1,2\},3)$ for all $k\geq 1$. Then $\mu_{\mathcal{P},\mathcal{D}}$ is a spectral measure and the support of $\mu_{\mathcal{P},\mathcal{D}}$ tiles $\mathbb{R}$ with unique tiling set $\mathbb{Z}$.
 \end{exam}

 \begin{proof}
 It is easy to see that $\mu_{\mathcal{P},\mathcal{D}}$ is a spectral one with spectrum $\mathbb{Z}$. Now we show that the support of $\mu_{\mathcal{P},\mathcal{D}}$ is a tile of $\mathbb{R}$.
 Let
 \begin{equation*}
 \mu=\delta_{2^{-1}\{0,1\}}\ast\delta_{6^{-1}\{0,1,2\}}\ast\delta_{2^{-1}6^{-1}\{0,1\}}\ast\delta_{6^{-2}\{0,1,2\}}\ast\delta_{2^{-1}6^{-2}\{0,1\}}\ast\cdots.
 \end{equation*}

 A simple calculation gives that
 \begin{equation*}
 T(\mathcal{P},\mathcal{D})=\left\{\sum_{n=1}^{\infty}(p_{n}\cdots p_{1})^{-1}d_{n}:d_n
 \in\mathcal{D}_n\right\}=[0,1].
 \end{equation*}
  where $T(\mathcal{P},\mathcal{D})$ is the support set of measure $\mu_{\{p_n\},\{\mathcal{D}_n\}}$. Hence the conclusion holds.
 \end{proof}

\end{sloppypar}
\end{document}